\def\K#1{\textcolor{black}{#1}}
\def\k2#1{\textcolor{black}{#1}}
\begin{document}
\bibliographystyle{plainnat}
\setcitestyle{numbers}

\title{An Erd\"os--R\'ev\'esz type law of the iterated logarithm for
reflected fractional Brownian motion}

\author{K.\ D\polhk{e}bicki}
\address{Mathematical Institute, University of
Wroc\l aw, pl.\ Grunwaldzki 2/4, 50-384 Wroc\l aw, Poland.}
\email{Krzysztof.Debicki@math.uni.wroc.pl}

\author{K.M.\ Kosi\'nski}
\address{Mathematical Institute, University of
Wroc\l aw, pl.\ Grunwaldzki 2/4, 50-384 Wroc\l aw, Poland.}
\email{Kamil.Kosinski@math.uni.wroc.pl}

\subjclass[2010]{Primary: 60F15, 60G70; Secondary: 60G22.}

\keywords{Extremes of Gaussian fields, storage processes, fractional Brownian motion, law of the iterated logarithm}

\date{\today}

\begin{abstract}
Let $B_H=\{B_H(t):t\in\mathbb R\}$ be \K{a} fractional Brownian motion with Hurst parameter $H\in(0,1)$.
For the stationary storage process $Q_{B_H}(t)=\sup_{-\infty<s\le t}(B_H(t)-B_H(s)-(t-s))$, \K{$t\ge0$}, we
provide a tractable criterion for assessing whether, for any positive, non-decreasing function $f$,
$ \K{\mathbb P(Q_{B_H}(t) > f(t)\, \text{ i.o.})}$
equals 0 or 1. Using this criterion we find that, for a family of functions
$f_p(t)$, such that $z_p(t)=\mathbb P(\sup_{s\in[0,f_p(t)]}Q_{B_H}(s)>f_p(t))/f_p(t)=\mathscr C(t\log^{1-p} t)^{-1}$, for some $\mathscr C>0$,
$\K{\mathbb P(Q_{B_H}(t) > f_p(t)\, \text{ i.o.})= 1_{\{p\ge 0\}}}$.
Consequently, with $\xi_p (t) = \sup\{s:0\le s\le t, Q_{B_H}(s)\ge f_p(s)\}$, for $p\ge 0$,
$\lim_{t\to\infty}\xi_p(t)=\infty$ and $\limsup_{t\to\infty}(\xi_p(t)-t)=0$ a.s.
Complementary, we prove an Erd\"os--R\'ev\'esz type law of the iterated logarithm lower bound on $\xi_p(t)$, i.e.,
$\liminf_{t\to\infty}(\xi_p(t)-t)/h_p(t) = -1$ a.s., $p>1$; $\liminf_{t\to\infty}\log(\xi_p(t)/t)/(h_p(t)/t) = -1$ a.s., $p\in(0,1]$, where $h_p(t)=(1/z_p(t))p\log\log t$.
\end{abstract}

\maketitle

\section{Introduction and Main Results}
\label{sec:intro}

The analysis of properties of \textit{reflected} stochastic processes, being
developed in the context of classical \textit{Skorokhod problems}
and \K{their} applications to queueing theory, risk
theory and financial mathematics, is an actively investigated field of
applied probability. In this paper we analyze 0-1 properties
of a class of such processes, that due to its importance in queueing theory (and dual risk theory)
gained substantial interest;
see, e.g., \citep{Norros94,Piterbarg01,Asmussen03,Asmussen10}
or novel works on $\gamma$-reflected Gaussian processes \citep{HJP13,LHJ15}.

Consider a reflected (at 0) fractional Brownian motion with drift $Q_{B_H} = \{Q_{B_H}(t):t\ge0\}$,
given by the following formula
\begin{equation}
\label{prob.0}
Q_{B_H}(t)=B_H(t)-ct+\max\left( Q_{B_H}(0),-\inf_{s\in[0,t]}(B_H(s)-cs)   \right),
\end{equation}
where
$c>0$ and
$B_H=\{B_H(t):t\in\rr\}$ is a fractional Brownian motion (fBm) with Hurst
parameter $H\in(0,1)$, i.e., a centered Gaussian process
with covariance function
$
\Cov(B_H(t), B_H(s)) = \half\left(|t|^{2H}+|s|^{2H}-|t-s|^{2H}\right).
$
We focus on the investigation of the long-time behavior of
the unique stationary solution of \eqref{prob.0}, which has the following representation
\begin{equation}
\label{eq:stat1}
Q_{B_H}(t)=\sup_{-\infty< s\le t}\left(B_H(t)-B_H(s)-c(t-s)\right).
\end{equation}
With no loss of generality in the reminder of \K{this} paper
we assume that the drift parameter $c\equiv 1$.
An important stimulus to analyze the distributional properties of
$Q_{B_H}$ and its functionals stems from \K{the Gaussian fluid} queueing theory, where
the stationary buffer content process
in a queue which is fed by $B_H$ and emptied with constant rate $c=1$
is described by \eqref{eq:stat1}; see e.g. \citep{Norros94}.
In particular, in the seminal paper by \citet{Husler99}
the exact asymptotics of one dimensional marginal distributions of $Q_{B_H}$ was derived;
see also \citep{Dieker05a,Debicki02,DeJ16} for results on more general
Gaussian input processes.

The purpose of this paper is
to investigate the asymptotic 0-1 behavior of the processes
$Q_{B_H}$. Our first contribution is an \K{analog} of the classical finding of
\citet{Watanabe70}, where an asymptotic 0-1 type of behavior for centered stationary
Gaussian processes was analyzed.
\begin{theorem}
\label{thm:equiv}
For all functions $f(t)$ that are positive and nondecreasing on some interval $[T,\infty)$, it follows that
\[
\prob{Q_{B_H}(t) > f(t)\quad\text{\rm{i.o.}}} = 0\quad \text{or}\quad 1,
\]
according as the integral
\[
\mathscr I_f :=
\int_T^\infty \frac{1}{f(u)}\prob{\sup_{t\in[0,f(u)]} Q_{B_H}(t) > f(u)}\D u
\]
{is finite or infinite}.
\end{theorem}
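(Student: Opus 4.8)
The plan is to prove the two halves of the dichotomy separately, after replacing the continuous-time event $\{Q_{B_H}(t)>f(t)\text{ i.o.}\}$ by a countable family of ``block'' events. Choose levels $\lambda_k\uparrow\infty$ with $\lambda_{k+1}^{2-2H}-\lambda_k^{2-2H}$ bounded, so that by the H\"usler--Piterbarg-type tail asymptotics the single-block quantity $\prob{\sup_{t\in[0,\lambda]}Q_{B_H}(t)>\lambda}$ changes by at most a fixed multiplicative factor as $\lambda$ runs over $[\lambda_k,\lambda_{k+1}]$; let $J_k=\{u\ge T:\lambda_k\le f(u)<\lambda_{k+1}\}$, an interval by monotonicity of $f$ (if some $J_k$ is unbounded then $f$ is eventually bounded and the assertion follows from ergodicity of $Q_{B_H}$, so assume all $J_k$ bounded). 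Cover each $J_k$ by $\lceil|J_k|/\lambda_k\rceil$ subintervals of length $\le\lambda_k$ and, for such a subinterval $I\subseteq J_k$, put
\[
A_I^-=\{\sup_{t\in I}Q_{B_H}(t)>\lambda_{k+1}\},\qquad A_I^+=\{\sup_{t\in I}Q_{B_H}(t)>\lambda_k\}.
\]
Since $\lambda_k\le f(t)<\lambda_{k+1}$ on $I$, one has $A_I^-\subseteq\{Q_{B_H}(t)>f(t)\text{ for some }t\in I\}\subseteq A_I^+$. Stationarity of $Q_{B_H}$ (which makes $\prob{A_I^\pm}$ depend only on $|I|$ and the level), the elementary subadditivity
\[
\prob{\sup_{t\in[0,a+b]}Q_{B_H}(t)>x}\le\prob{\sup_{t\in[0,a]}Q_{B_H}(t)>x}+\prob{\sup_{t\in[0,b]}Q_{B_H}(t)>x}
\]
(split the interval and use stationarity), and the precise single-block asymptotics then allow one to compare $\sum_{k}\sum_{I}\prob{A_I^\pm}$, the inner sum over the cover of $J_k$, with $\int_T^\infty\tfrac{1}{f(u)}\prob{\sup_{t\in[0,f(u)]}Q_{B_H}(t)>f(u)}\,\D u=\mathscr I_f$ up to multiplicative constants; this comparison, which also needs the local-versus-linear form of the H\"usler--Piterbarg asymptotics when $|J_k|$ is small, is the one point of the argument where the sharp single-block tail is essential. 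Re-enumerate the subintervals as $I_1,I_2,\dots$.

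\emph{Convergence part.} If $\mathscr I_f<\infty$ then $\sum_n\prob{A_{I_n}^+}<\infty$; since $\{Q_{B_H}(t)>f(t)\text{ i.o.}\}\subseteq\limsup_n A_{I_n}^+$, the first Borel--Cantelli lemma yields probability $0$.

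\emph{Divergence part.} If $\mathscr I_f=\infty$ then $\sum_n\prob{A_{I_n}^-}=\infty$ and $\limsup_n A_{I_n}^-\subseteq\{Q_{B_H}(t)>f(t)\text{ i.o.}\}$, so it suffices to show $\prob{\limsup_n A_{I_n}^-}>0$ and that the ``i.o.''\ event is trivial. For the latter I would use that, for large $t$, the supremum defining $Q_{B_H}(t)$ is attained within distance $O(f(t))$ of $t$ with probability exponentially close to $1$, so that (up to a null set handled by a Borel--Cantelli estimate) the ``i.o.''\ event lies in the tail $\sigma$-field of the increments of $B_H$, which is trivial by the mixing property of fBm. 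For $\prob{\limsup_n A_{I_n}^-}>0$ the events are dependent, so I would invoke the Kochen--Stone lemma: it is enough that
\[
\liminf_{N\to\infty}\frac{\bigl(\sum_{n\le N}\prob{A_{I_n}^-}\bigr)^2}{\sum_{n,m\le N}\prob{A_{I_n}^-\cap A_{I_m}^-}}>0,
\]
i.e.\ to control $\prob{A_{I_n}^-\cap A_{I_m}^-}$ for $n<m$. This is done in two steps. First one \emph{localizes}: because the Gaussian exponent $(f(t)+v)^2/(2v^{2H})$ attached to the lookback $v$ in $Q_{B_H}(t)=\sup_{s\le t}(B_H(t)-B_H(s)-(t-s))$ is minimized at $v\asymp f(t)$ and increases for $v\gg f(t)$, one may replace $A_{I_n}^-$, up to an error exponentially smaller than $\prob{A_{I_n}^-}$, by an event depending only on the increments of $B_H$ over a window of length $O(\lambda_{k(n)})$ around $I_n$. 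Then one \emph{decouples} the two windows, either through the normal comparison (Berman--Slepian) inequality applied to the Gaussian field $\{B_H(t)-B_H(s)-(t-s)\}$, or through the Mandelbrot--Van Ness moving-average representation, which writes the increments over the later window as a measurable function of an independent copy plus a Gaussian perturbation whose sup-variance tends to $0$; the resulting error is governed by the covariance of fBm increments across the gap between the windows, which is $O((\text{gap})^{2H-2})$. For $H\le1/2$ this is summable — and for $H=1/2$ the separated windows are exactly independent — so the double sum of cross-covariances is at once negligible; for $H\in(1/2,1)$ it is not summable, and one passes to a subsequence of blocks along which the gaps grow quickly enough to annihilate $\sum_{n,m}(\prob{A_{I_n}^-\cap A_{I_m}^-}-\prob{A_{I_n}^-}\prob{A_{I_m}^-})$ while keeping $\sum_n\prob{A_{I_n}^-}=\infty$.

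\emph{Expected main obstacle.} All the genuine work is in the divergence part, in making the localization radius, the comparison-inequality error terms and — for $H\in(1/2,1)$ — the thinning of the block index set interact so that the Kochen--Stone ratio stays bounded away from $0$ even when $\mathscr I_f$ diverges only slowly; verifying that the power-law cross-covariance bound beats $\prob{A_{I_n}^-}\prob{A_{I_m}^-}$ uniformly in the non-summable regime is the delicate step. The remaining ingredients — stationarity, monotonicity, the subadditivity estimate, the single-block tail asymptotics (which may be quoted), and the triviality of the ``i.o.''\ event — are comparatively routine.
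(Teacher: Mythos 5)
Your decomposition into level-set blocks $J_k$ and the two-sided events $A_I^{\pm}$ is a genuinely different bookkeeping device from the paper's, which instead tiles $[S,\infty)$ directly by intervals $M_i$ of length $f(a_i)$ and compares to $\mathscr I_f$ by monotonicity of $f$. For the convergence half the two routes are essentially equivalent and both are routine first Borel--Cantelli arguments. Your proposal to handle the divergence half via Kochen--Stone together with a separate triviality argument (tail $\sigma$-field of fBm increments) is also a different route from the paper's, which avoids triviality altogether: the paper bounds $1-\prob{E_i^c\text{ i.o.}}=\lim_m\prod_{k\ge m}\prob{E_k}+\lim_m\bigl(\prob{\cap_{k\ge m}E_k}-\prod_{k\ge m}\prob{E_k}\bigr)$, kills the first limit by divergence of the sum, and kills the second by a Berman-inequality estimate ($P_2$ in Lemma~\ref{lem:bound}). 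Your triviality argument would need real work (the event does not obviously live in the tail field, since $Q_{B_H}$ looks back over $(-\infty,t]$), and is unnecessary.

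The genuine gap is in how you estimate the cross-terms $\prob{A_{I_n}^-\cap A_{I_m}^-}-\prob{A_{I_n}^-}\prob{A_{I_m}^-}$. You assert the decoupling error is governed by the fBm cross-covariance over the gap, $O(\mathrm{gap}^{2H-2})$, conclude this is non-summable for $H\in(\tfrac12,1)$, and propose thinning the block index set. Two problems. First, Berman's inequality does not give an error of order $|r|$: it gives $|r|\exp\bigl(-\tfrac{u_i^2+u_j^2}{2(1+|r|)}\bigr)$, and the exponential factor, with $u_i^2\sim v_i^2\asymp 2\log t_i$, supplies an extra power-law decay $\approx(t_it_j)^{-1/(1+|r|)}$. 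This is precisely what the paper's $P_2$ estimate exploits, and it makes the double sum $O(S^{-\rho})$ uniformly in $H\in(0,1)$ with no case distinction; your framing of the error as $\mathrm{gap}^{2H-2}$ discards the dominant factor. Second, and fatally for your proposed fix: thinning to enforce fast gap growth cannot in general preserve $\sum_n\prob{A_{I_n}^-}=\infty$. When $\mathscr I_f$ diverges only logarithmically (which is exactly the critical regime, cf.\ $f_0$ in \eqref{eq:defp}) the $\prob{A_{I_n}^-}$ decay like $1/(n\log n)$, and passing to any subsequence $n_j$ with $n_{j+1}/n_j$ bounded away from $1$ already makes the thinned sum finite. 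So the thinning step would destroy the very divergence you need, and there is no room to choose gaps ``quickly enough'' to kill the cross-terms while keeping divergence — the exponential factor from the normal comparison inequality is not an optional improvement but the ingredient that makes the argument work at all. You should also note that the paper additionally invokes the Watanabe-type reduction (Lemma~\ref{lem:v2}) to restrict to $f$ growing like $(\log t)^{1/(2(1-H))}$, which is what guarantees the hypothesis $f(T)/f(S)\le\mathcal C$ in the $P_2$ estimate; your ergodicity remark handles only the case of bounded $f$ and does not supply this.
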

\noindent The exact asymptotics, as $u$ grows large, of the probability in $\mathscr I_f$ \K{was}
found by \citet[Theorem 7]{Piterbarg01}.
\K{Namely,} for any $T>0$,
\begin{equation}
\label{eq:actualAsympt}
\prob{\sup_{t\in[0,T f(u)]} Q_{B_H}(t) > f(u)}
=
\sqrt\pi a^\frac{2}{H} b^{-\half}\mathcal H_{B_H}^2  T(v_f(u))^{\frac{2}{H}-1}\Psi(v_f(u))(1+o(1)), \as u,
\end{equation}
where $v_f(u) = A f^{1-H}(u)$, $\Psi(u)=1-\Phi(u)$,
$\Phi$ is the distribution function of \K{the} unit normal law and the constants $a, b, A, \mathcal H_{B_H}$ are given explicitly in
\autoref{sec:prelim}.
Since relation \eqref{eq:actualAsympt} also holds when $T=T(u)\to 0$, provided that $T(u) (f(u))^{(1-H)/H}\toi$,
\K{we have} that for $H\in(0,\half)$, as $u\toi$,
\[
\frac{1}{f(u)}\prob{\sup_{t\in[0,f(u)]} Q_{B_H}(t) > f(u)}\sim
\prob{\sup_{t\in[0,1]} Q_{B_H}(t) > f(u)}.
\]

\autoref{thm:equiv} provides a tractable criterion for settling the dichotomy of $\prob{Q_{B_H}(t) > f(t)\ \text{\rm{i.o.}}}$.
For instance, \K{let} $C_H = (2(1-H)^2-H)/(2H(1-H))$ and
\begin{equation}
\label{eq:defp}
f_p(s) = \left(\frac{2}{A^2}\left(\log s + \left(1+C_H-p\right)\log_2 s\right)\right)^{\frac{1}{2(1-H)}},\quad p\in\rr, H\in(0,1).
\end{equation}
One \K{can check} that, as $u\toi$,
\begin{equation}
\label{eq:Gf}
\frac{1}{f_p(u)}\prob{ \sup_{t\in[0,f_p(u)]} Q_{B_H}(t) > f_p(u)}
=
\frac{a^\frac{2}{H} b^{-\half}}{\sqrt 2}\mathcal H_{B_H}^2 A^{\frac{1}{1-H}}2^{C_H} (u \log^{1-p} u)^{-1}(1+o(1)).
\end{equation}
Hence, for any $p\in\rr$,
\[
\prob{Q_{B_H}(t) > f_p (t)\quad\text{i.o.}}=
\left\{
\begin{array}{cc}
1 & \text{if } p\ge 0, \\
0 & \text{if } p<0.  \\
\end{array}
\right.
\]
\begin{corollary}
\label{cor:main}
For any $H\in(0,1)$,
\[
\limsup_{t\toi} \frac{Q_{B_H}(t)}
                     {\left(
 \log t
\right)^{\frac{1}{2(1-H)}}} =\left(\frac{2}{A^2}\right)^{\frac{1}{2(1-H)}}
\quad\text{a.s.}
\]
\end{corollary}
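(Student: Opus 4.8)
The plan is to deduce the corollary from the dichotomy for the family $\{f_p\}_{p\in\mathbb R}$ recorded just above its statement --- which is Theorem~\ref{thm:equiv} applied to $f=f_p$ together with the asymptotic~\eqref{eq:Gf} --- by sandwiching $Q_{B_H}$ between two members of the family whose normalizations both tend to $(2/A^2)^{1/(2(1-H))}$. The starting point is the elementary remark that
\[
f_p(s)=\Bigl(\tfrac{2}{A^2}\Bigr)^{\frac{1}{2(1-H)}}(\log s)^{\frac{1}{2(1-H)}}\Bigl(1+(1+C_H-p)\tfrac{\log_2 s}{\log s}\Bigr)^{\frac{1}{2(1-H)}},
\]
so that, since $\log_2 s/\log s\to0$, for every fixed $p\in\mathbb R$ one has $f_p(s)/(\log s)^{1/(2(1-H))}\to(2/A^2)^{1/(2(1-H))}$ as $s\to\infty$; the same expression shows $f_p$ is positive and eventually strictly increasing, so Theorem~\ref{thm:equiv} applies to it on a suitable $[T,\infty)$.

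For the upper bound, I would fix any $p<0$. By~\eqref{eq:Gf} the integrand in $\mathscr I_{f_p}$ is a positive constant times $(u(\log u)^{1-p})^{-1}(1+o(1))$, and the change of variables $v=\log u$ gives $\int^\infty (u(\log u)^{1-p})^{-1}\,\D u<\infty$ since $1-p>1$; hence $\mathscr I_{f_p}<\infty$. Theorem~\ref{thm:equiv} then yields that, almost surely, $Q_{B_H}(t)\le f_p(t)$ for all large $t$, whence
\[
\limsup_{t\to\infty}\frac{Q_{B_H}(t)}{(\log t)^{\frac{1}{2(1-H)}}}\le\lim_{t\to\infty}\frac{f_p(t)}{(\log t)^{\frac{1}{2(1-H)}}}=\Bigl(\frac{2}{A^2}\Bigr)^{\frac{1}{2(1-H)}}\qquad\text{a.s.}
\]

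For the lower bound, I would take $p=0$. Then $1-p=1$, so $\mathscr I_{f_0}=\infty$ because $\int^\infty(u\log u)^{-1}\,\D u=\infty$, and Theorem~\ref{thm:equiv} gives that, almost surely, $Q_{B_H}(t_n)>f_0(t_n)$ along some sequence $t_n\uparrow\infty$. Dividing by $(\log t_n)^{1/(2(1-H))}$ and letting $n\to\infty$ shows $\limsup_{t\to\infty}Q_{B_H}(t)/(\log t)^{1/(2(1-H))}\ge(2/A^2)^{1/(2(1-H))}$ a.s. Intersecting this almost-sure event with the one from the preceding paragraph gives the asserted equality.

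The corollary is thus essentially bookkeeping once Theorem~\ref{thm:equiv} and~\eqref{eq:Gf} are in hand, and I do not expect a serious obstacle at this level. The only genuinely delicate point belongs to the verification of~\eqref{eq:Gf} itself: the coefficient $1+C_H-p$ of $\log_2 s$ in~\eqref{eq:defp} is tuned precisely so that, after inserting $v_{f_p}(u)^2=2\bigl(\log u+(1+C_H-p)\log_2 u\bigr)$ into~\eqref{eq:actualAsympt} with $T=1$, applying the Mills ratio $\Psi(x)\sim(x\sqrt{2\pi})^{-1}e^{-x^2/2}$, and collecting all powers of $v_{f_p}(u)\sim\sqrt{2\log u}$ (from the prefactor $(v_{f_p}(u))^{2/H-1}$ and from $1/f_p(u)=A^{1/(1-H)}(v_{f_p}(u))^{-1/(1-H)}$), the net exponent of $\log u$ is exactly $C_H=(2(1-H)^2-H)/(2H(1-H))$; this is what makes the factor $2^{C_H}(\log u)^{C_H}$ cancel the $(\log u)^{-(1+C_H)}$ coming from $e^{-v_{f_p}(u)^2/2}$, leaving the clean rate $(u\log^{1-p}u)^{-1}$. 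The remaining estimates --- monotonicity and positivity of $f_p$, the two elementary logarithmic integrals, and the squeezing --- are routine.
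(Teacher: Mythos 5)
Your proof is correct and coincides with what the paper implicitly intends: the corollary is stated immediately after the dichotomy $\prob{Q_{B_H}(t)>f_p(t)\text{ i.o.}}=1_{\{p\ge0\}}$, and the paper leaves its derivation unstated precisely because it is the routine sandwiching you carry out --- taking any $p<0$ (where $\mathscr I_{f_p}<\infty$, hence $Q_{B_H}(t)\le f_p(t)$ eventually a.s.) for the upper bound and $p=0$ (where $\mathscr I_{f_0}=\infty$, hence $Q_{B_H}(t_n)>f_0(t_n)$ along some $t_n\uparrow\infty$ a.s.) for the lower bound, then using $f_p(t)/(\log t)^{1/(2(1-H))}\to(2/A^2)^{1/(2(1-H))}$ for all $p$. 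Your remark on verifying~\eqref{eq:Gf} is also accurate, though that verification belongs to the preceding display rather than to the corollary itself.
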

\noindent This result extends findings of \citet[Theorem 1]{GlZ00}, where it was proven that
the above convergence holds weakly as well as in $L_p$ for all $p\in[1,\infty)$.

Now consider the process $\xi_p=\{\xi_p(t):t\ge 0\}$ defined as
\[
\xi_p(t)=\sup\{s:0\le s\le t, Q_{B_H}(s)\ge f_p(s)\}.
\]
Since $\mathscr I_{f_p} = \infty$ for $p\ge 0$, from \autoref{thm:equiv} it follows that
\[
\lim_{t\toi} \xi_p(t) = \infty\quad\text{a.s.} \quad\text{and}\quad
\limsup_{t\toi}(\xi_p(t) - t) = 0\quad\text{a.s.}
\]
Let, cf. \eqref{eq:Gf},
\[
h_p(t)= p\left(\frac{1}{f_p(t)}\prob{ \sup_{s\in[0,f_p(t)]} Q_{B_H}(s) > f_p(t) }\right)^{-1}\log_2 t.
\]

The second contribution of this paper is an Erd\"os--R\'ev\'esz type of law of the iterated logarithm for the process $\xi_p$. We refer to \citet{Sha92} for more background and references on
Erd\"os--R\'ev\'esz type law of the iterated logarithm
and a related result for centered stationary
Gaussian processes; \K{see also} \citet{Debicki17} for extensions to order statistics.
\begin{theorem}
\label{thm:main}
If $p>1$, then
\[
\liminf_{t\toi}\frac{\xi_p(t)-t}{h_p(t)} = - 1\ \ {\rm a.s.}
\]
If $p\in(0,1]$, then
\[
\liminf_{t\toi}\frac{\log\left(\xi_p(t)/t\right)}{h_p(t)/t } = - 1\ \ {\rm a.s.}
\]
\end{theorem}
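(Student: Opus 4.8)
The plan is to recast both displays through the \emph{intrinsic clock}
\[
\tau_p(t)=\int_T^t z_p(s)\,\D s,\qquad z_p(s)=\frac1{f_p(s)}\prob{\sup_{r\in[0,f_p(s)]}Q_{B_H}(r)>f_p(s)},
\]
which by \eqref{eq:actualAsympt}--\eqref{eq:Gf} is the asymptotic rate of $f_p$-exceedance clusters of $Q_{B_H}$ per unit time and satisfies $\tau_p(t)=\tfrac{\mathscr C}{p}(\log t)^{p}(1+o(1))$ for a suitable $\mathscr C>0$. Since $f_p$ is slowly varying, $z_p$ is eventually decreasing and essentially constant over any time interval on which $\tau_p$ increases by $O(\log_2 t)$; using this with $\tau_p'=z_p$ and with $\lim_{t\toi}\xi_p(t)=\infty$, $\limsup_{t\toi}(\xi_p(t)-t)=0$ a.s.\ (consequences of \autoref{thm:equiv}), a routine excursion-by-excursion comparison shows that \autoref{thm:main} is equivalent to
\[
\limsup_{t\toi}\frac{\tau_p(t)-\tau_p(\xi_p(t))}{p\log_2 t}=1\qquad\text{a.s.},
\]
where for $p\in(0,1]$ one compares $\log(\xi_p(t)/t)$ rather than $\xi_p(t)-t$ because $\xi_p(t)/t\to0$, while for $p>1$ one has $\xi_p(t)/t\to1$ (supplied by the upper half below) and the two normalisations agree.

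The analytic engine is a uniform quantitative Poisson approximation for the $f_p$-exceedance clusters of $Q_{B_H}$: read off in the clock $\tau_p$ their epochs form, up to errors vanishing uniformly as the level grows, a unit-rate Poisson process; equivalently, writing $\sigma_j$ for the time of the $j$-th cluster (so $\tau_p(\sigma_j)=j(1+o(1))$), the successive clock-gaps $G_j:=\tau_p(\sigma_{j+1})-\tau_p(\sigma_j)$ satisfy $\prob{G_j>x}=e^{-x(1+o(1))}$ uniformly over the relevant range of $x$ and behave jointly like independent unit exponentials up to uniformly small errors. One obtains this by cutting a clock-interval into short admissible blocks on which Piterbarg's asymptotics \eqref{eq:actualAsympt} apply in both directions ($f_p$ being essentially constant there) and multiplying the complementary probabilities --- the accumulated error staying harmless since only $O(\log_2 t)$ blocks occur --- provided exceedances on well-separated blocks are asymptotically independent. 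I expect that independence, with the relative-error control the large gaps demand, to be the main obstacle: for $H\in(\tfrac12,1)$ the fBm has long-range dependence, so one runs a Slepian/Berman comparison and must show the covariances entering it are negligible. They are, because the negative drift localises the supremum in \eqref{eq:stat1}: up to an error negligible against $z_p$ one may replace $Q_{B_H}(s)$ on a block by $\sup_{s-M_s\le r\le s}(B_H(s)-B_H(r)-(s-r))$ with $M_s\asymp f_p(s)$ polylogarithmic in $s$, after which well-separated blocks depend on increments of $B_H$ in disjoint short intervals at distances growing like a power of the time; weighted by the exceedance-tail factors (note $v_{f_p}(t)^2\sim2\log t$ by \eqref{eq:defp}) those covariances sum to a negligible total. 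This step, and the uniformity needed in \eqref{eq:actualAsympt}, are where the argument leans on \citet{Husler99,Piterbarg01} and the extreme-value theory behind them, in the spirit of \citet{Sha92}.

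Granting the engine, it remains to prove $\limsup_j G_j/\log j=1$ a.s.\ for such a sequence of nearly-independent, nearly-unit exponentials, and to unwind via $\log j\sim p\log_2\sigma_j\sim p\log_2\sigma_{j+1}$ (from $\tau_p(\sigma_j)\sim j$ and $\tau_p(t)\sim\tfrac{\mathscr C}{p}(\log t)^p$). For the upper half, $\prob{G_j>(1+\varepsilon)\log j}=j^{-(1+\varepsilon)+o(1)}$ is summable, so Borel--Cantelli gives $G_j\le(1+\varepsilon)\log j$ eventually; this forces $\limsup_t(\tau_p(t)-\tau_p(\xi_p(t)))/(p\log_2 t)\le1+\varepsilon$ and in particular $\xi_p(t)/t\to1$ for $p>1$, closing the loop above. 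For the lower half, $\prob{G_j>(1-\varepsilon)\log j}=j^{-(1-\varepsilon)+o(1)}$ has divergent sum, and the joint near-independence from the Poisson approximation lets the Kochen--Stone form of the second Borel--Cantelli lemma apply, giving $G_j>(1-\varepsilon)\log j$ infinitely often and hence $\limsup_j G_j/\log j\ge1-\varepsilon$. Letting $\varepsilon\downarrow0$ along a countable set yields $\limsup_j G_j/\log j=1$ a.s., and therefore \autoref{thm:main}.
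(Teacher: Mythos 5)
Your clock-time reformulation is an appealing and genuinely different route from the paper's, but as written it has a gap at exactly the place you flag yourself: the ``uniform quantitative Poisson approximation'' that drives everything is asserted, not proved. Both halves of the target $\limsup_j G_j/\log j=1$ a.s.\ are sensitive to the precision of that approximation. For the upper half, Borel--Cantelli needs $\prob{G_j>(1+\varepsilon)\log j}$ summable, so you need $\prob{G_j>x}=\exp(-x(1+o(1)))$ with an error that is \emph{uniform} for $x$ up to order $\log j$ --- a moderate-deviations statement about non-exceedance of $Q_{B_H}$ over a clock-interval of length $\asymp\log j$, for a non-stationary process which for $H>\tfrac12$ has long-range dependence. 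For the lower half you additionally need joint near-independence of the gaps at the same precision in order to invoke Kochen--Stone, and you tacitly use $\tau_p(\sigma_j)\sim j$ a.s.\ (a strong law for the cluster count), which is itself downstream of the Poisson approximation rather than an independent input. None of this is routine: after the transformation \eqref{eq:basic_transformation} it is precisely the content of \autoref{lem:bound} and \autoref{lem:lbound}, which the paper establishes by discretizing the set $\tilde M_i\times J(v_i)$ on a $\theta v^{-1/H}$-grid, applying Berman's inequality block by block, and summing the correlation corrections using the explicit decay $r^*(t)\le\mathcal K t^{-\lambda}$ from \eqref{eq:defr*} together with \autoref{lem:asymptotics} and \autoref{lem:discreate_approx}. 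Your sketch --- localize the supremum in \eqref{eq:stat1} to a polylog window and sum covariances --- points at the same estimates but does not carry them out, so the engine of your argument is missing.

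Two secondary points. First, for $p\in(0,1]$ it is not true that $\xi_p(t)/t\to0$; one has $\limsup_{t\toi}\xi_p(t)/t=1$ a.s.\ because $\limsup_{t\toi}(\xi_p(t)-t)=0$ a.s.\ by \autoref{thm:equiv}. The logarithmic normalization is needed because $h_p(t)/t\toi$ when $p\le1$, which makes $(\xi_p(t)-t)/h_p(t)\to0$ trivially; the phenomenon is a $\liminf$ along a sparse subsequence, so the reduction from \autoref{thm:main} to the gap statement is not as ``routine'' as you claim and needs to be written out with the right inequalities in each direction. Second, it is worth noting how the paper sidesteps all of this: it never passes through a clock reformulation, but instead works directly with deterministic subsequences $T_k=\exp(k^{1/p})$ (Step~1, lower bound) and $T_k=\exp(k^{(1+\varepsilon^2)/p})$ (Step~2, upper bound), reading off the required summability/divergence from \autoref{lem:bound}, \autoref{lem:lbound}, and the general Borel--Cantelli lemma. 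That choice of subsequence is what replaces both the Poisson approximation and the strong law for cluster counts in your plan, and it is the part of the argument you would have to reconstruct to make your route complete.
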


Now, let us complementary put $\eta_p = \{\eta_p(t):t\ge 0\}$, where
\[
\eta_p(t) = \inf\{s\ge t: Q_{B_H}(s)\ge f_p(s)\}.
\]
Since
\[
\prob{\xi_p(t) - t\le - x} = \prob{\sup_{s\in(t-x,t]}\frac{Q_{B_H}(s)}{f_p(s)}< 1}\]
and
\[
\prob{z - \eta_p(z)\le - x} = \prob{\sup_{s\in[z,z+x]}\frac{Q_{B_H}(s)}{f_p(s)}< 1},
\]
then it follows that
\begin{equation}
\label{eq:eq}
\liminf_{t\toi}\frac{\xi_p(t)-t}{h_p(t)}=\liminf_{z\toi}\frac{z-\eta_p(z)}{h_p(z)}.
\end{equation}

\autoref{thm:main} shows that for $t$ big enough,
there exists an $s$ in $[t - h_p(t), t]$ (as well as in $[t, t+ h_p(t)]$ by \eqref{eq:eq}) such that
$Q_{B_H}(s)\ge f_p(s)$ and that the length \K{$h_p(t)$ of the interval
is the smallest} possible.
This shines new light on \K{results, which are} intrinsically connected with Gumbel limit theorems; see, e.g.,
\citep{Leadbetter83}, where the function $h_p(t)$ plays crucial role. We shall pursue this elsewhere.

The paper is organized as follows.
In \autoref{sec:prelim} we introduce some useful properties of storage processes fed by fractional Brownian motion. In \autoref{sec:auxLem} we provide a collection of basic results on how to interpret extremes of the storage process $Q_{B_H}$ as extremes of a Gaussian field related to the
fractional Brownian motion $B_H$. Furthermore, \K{in \autoref{sec:Proofs} we prove lemmas,}
which constitute building blocks of the proofs of the main results.

\section{Properties of the storage process}
\label{sec:prelim}
\K{
In this section we introduce} some notation and state some
properties of the supremum of the  process $Q_{B_H}$ as derived in \citep{Piterbarg01, Husler04b}. We begin with the relation
\begin{equation}
\label{eq:basic_transformation}
\prob{\sup_{t\in[0,T]} Q_{B_H}(t)>u} =\prob{ \sup_{\substack{s\in[0,T/u] \\ \tau\ge0}} Z_u(s,\tau)>u^{1-H}},\quad\text{for any}\quad T>0,
\end{equation}
where, with $\nu(\tau)=\tau^{-H}+\tau^{1-H}$,
\[
Z_{u}(s,\tau):= \frac{B_H(u(\tau+s))-B_H(su)}{\tau^Hu^H\nu(\tau)}
\]
is a Gaussian field. Note that the self-similarity property of $B_H$ implies that
the field $Z_u$ has the same distribution for any $u$.
Thus, we do not use $u$ as an additional parameter in the
\K{following} notation whenever it is not needed; \K{let} $Z(s,\tau):= Z_1(s,\tau)$. Furthermore, the field $Z(s,\tau)$ is stationary in $s$, but not in $\tau$.
The variance $\sigma_Z^2(\tau)$ of the field $Z(s,\tau)$ equals $\nu^{-2}(\tau)$ and
$\sigma_Z(\tau)$ has a single maximum point at \[\tau_0 = \frac{H}{1-H}.\]
Taylor \K{expansion leads to}
\[
\sigma_Z(\tau)=\frac{1}{A}-\frac{B}{2A^2}(\tau-\tau_0)^2 + O((\tau-\tau_0)^3),
\]
as $\tau\to\tau_0$, where
\begin{align*}
A&= \frac{1}{1-H}\left(\frac{H}{1-H}\right)^{-H}=\nu(\tau_0),\\
B&= H\left(\frac{H}{1-H}\right)^{-H-2}=\nu''(\tau_0).\\
\end{align*}

Let us define the correlation function of the process $Z_u$ as follows
\begin{align}
r_{u,u'}&(s,\tau,s',\tau')
:= \mathbb E Z_u(s,\tau) Z_u'(s',\tau')\nu(\tau)\nu(\tau')\nonumber\\
\label{eq:rdef}
&= \frac{|us-u's'|^{2H}}{2(u\tau u'\tau')^{H}}
\left(
\left|1+\frac{u\tau}{(us-u's')}\right|^{2H}-\left|1+\frac{(u\tau-u'\tau')}{(us-u's')}\right|^{2H}
+ \left|1-\frac{u'\tau'}{(us-u's')}\right|^{2H} - 1\right).
\end{align}
By series expansion we find for any fixed $\tau_1<\tau_0<\tau_2$ and $\tau,\tau'$ with
$0<\tau_1<\tau,\tau'<\tau_2<\infty$,
\[
|r_{u,u'}(s,\tau,s',\tau')| \le
\frac{|us-u's'|^{2H}}{(u\tau u'\tau')^{H}}2H|2H-1||us-u's'|^{-2}(u\tau u'\tau'),\quad 2H\ne 1,
\]
provided that $|\frac{u}{us-u's'}|$ and $|\frac{u'}{us-u's'}|$ are sufficiently small. For $2H=1$, we have $r_{u,u'}(s,\tau,s',\tau')=0$ since the increments of Brownian motion on disjoint intervals are independent. Therefore,

\begin{equation}
\label{eq:defr*}
r^*(t) := \sup_{\substack{\left|\frac{us-u's'}{u}\right|,\left|\frac{us-u's'}{u'}\right|\ge t\\ \tau_1<\tau, \tau'<\tau_2\\u,u',s,s'>0}} |r_{u,u'}(s,\tau,s',\tau')|
\le
\mathcal K t^{-\lambda},
\end{equation}
for $\lambda = 2-2H >0$, $t$ sufficiently large and some positive constant $\mathcal K$ depending only on $H$, $\tau_1$ and $\tau_2$.
Similarly, from \eqref{eq:rdef} it follows that for any fixed $M$ there exists $\delta\in(0,1)$ such that
\begin{equation}
\label{eq:asymptotic}
0<\delta\le\inf_{\substack{\left|\frac{us-u's'}{u}\right|,\left|\frac{us-u's'}{u'}\right|\le M\\|\tau-\tau^*|,|\tau'-\tau^*|\le m}} r_{u,u'}(s,\tau,s',\tau')\le
\sup_{\substack{\left|\frac{us-u's'}{u}\right|,\left|\frac{us-u's'}{u'}\right|\le M\\|\tau-\tau^*|,|\tau'-\tau^*|\le m}} r_{u,u'}(s,\tau,s',\tau')\le 1-\delta<1,
\end{equation}
for sufficiently small $m$.

\subsection{Asymptotics}
Due to the following lemma, while analyzing tail asymptotics of the supremum of $Z$, we can restrict the considered domain of $(s,\tau)$ to a strip with $|\tau-\tau_0|\le \log v/v$.
\begin{lem}[\citet{Piterbarg01}, Lemma 2 and 4]
\label{lem:asymptotics}
There exists a positive constant $C$ such that for any $v,T > 0$,
\begin{equation}
\label{eq:tauBigger}
\prob{ \sup_{\substack{s\in[0,T] \\ |\tau-\tau_0|\ge \log v/v}} A Z(s,\tau)>v}
\le
C T v^{2/H}\exp\left(-\half v^2 - b \log^2 v\right),
\end{equation}
where $b=B/(2A)$. Furthermore,
for any $T>0$, with $a=1/(2\tau_0^{2H})$, as $v\toi$,
\[
\prob{ \sup_{\substack{s\in[0,T] \\ |\tau-\tau_0|\le \log v/v}} A Z(s,\tau)>v}
=
\sqrt\pi a^\frac{2}{H} b^{-\half}\mathcal H_{B_H}^2 T v^{\frac{2}{H}-1}\Psi(v)(1+o(1)),
\]
where
\[
\mathcal H_{B_H}  = \lim_{T\toi} T^{-1}\ee \exp\left(\sup_{t\in[0,T]}\left(\sqrt 2 B_H(t) - t^{2H}\right)\right)
\in(0,\infty),
\]
is the so-called \textit{Pickands' constant}.
This holds also for $T= v^{-1/H'}$, with $1>H'>H$.
\end{lem}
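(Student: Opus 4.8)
The plan is to follow the Pickands--Piterbarg double-sum method. One exploits that the standardized field $AZ$ has variance $A^2\sigma_Z^2(\tau)$ depending on $\tau$ only, attaining its maximal value $1$ exactly on the ridge $\{\tau=\tau_0\}$, with the quadratic decay recorded in the Taylor expansion of $\sigma_Z$ above, while near the ridge the field is locally fractional-Brownian of index $2H$. I would treat the two displays separately: the off-ridge bound \eqref{eq:tauBigger} by a covering argument together with Piterbarg's inequality, and the exact asymptotics by a genuine Pickands reduction along the ridge $\{\tau=\tau_0\}$ combined with a Laplace estimate transverse to it.

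For \eqref{eq:tauBigger}, I would fix a small $\varepsilon>0$ and split $\{|\tau-\tau_0|\ge\log v/v\}$ into the band $\log v/v\le|\tau-\tau_0|\le\varepsilon$ and the two far tails $\tau\downarrow 0$, $\tau\uparrow\infty$. On the band one uses $A\sigma_Z(\tau)=1-b(\tau-\tau_0)^2+O((\tau-\tau_0)^3)$ with $b=B/(2A)$; the dominant contribution comes from the layer $|\tau-\tau_0|\asymp\log v/v$, where the sharp expansion gives $v^2/(A^2\sigma_Z^2(\tau))\ge v^2+2b\log^2 v-O(\log v)$ and hence $\Psi(v/(A\sigma_Z(\tau)))\le v^{O(1)}\exp(-\half v^2-b\log^2 v)$. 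Covering $[0,T]\times\{\log v/v\le|\tau-\tau_0|\le\varepsilon\}$ by boxes of the roughness scale $v^{-1/H}$, applying Piterbarg's inequality box by box, and summing over the $O(Tv^{1/H})$ boxes then turns the remaining polynomial losses into a single factor $v^{2/H}$ and leaves $\exp(-\half v^2-b\log^2 v)$. On the far tails $\nu(\tau)=\tau^{-H}+\tau^{1-H}\to\infty$, so $\sigma_Z(\tau)$ stays well below $1/A$ and decays, and there a Borell--TIS estimate gives a super-exponentially smaller contribution, absorbed into the constant $C$. This yields \eqref{eq:tauBigger}, with $C$ independent of $T$.

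For the exact asymptotics I would first invoke \eqref{eq:tauBigger} to restrict to $|\tau-\tau_0|\le\log v/v$, and then to a fixed neighbourhood $|\tau-\tau_0|\le\varepsilon$. The crucial local computation is the structure of $Z$ near a ridge point $(s_0,\tau_0)$: writing $s=s_0+x$, $\tau=\tau_0+y$ and decomposing the numerator $B_H(s+\tau)-B_H(s)$ into the two increments $B_H(s_0+\tau_0+x+y)-B_H(s_0+\tau_0)$ and $B_H(s_0+x)-B_H(s_0)$ — which on the roughness scale are asymptotically independent fractional Brownian motions of index $2H$, their cross-correlation being of smaller order by the series expansion following \eqref{eq:rdef} — one sees that in the sheared variables $(u,w)=(x+y,x)$ the rescaled field tends to something proportional to $B_H^{(1)}(u)-B_H^{(2)}(w)$ with $B_H^{(1)},B_H^{(2)}$ independent. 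Since this limit factorizes over $u$ and $w$, its generalized Pickands constant factorizes as $\mathcal H_{B_H}\cdot\mathcal H_{B_H}=\mathcal H_{B_H}^2$, which is the origin of the square. The remainder is the standard double-sum procedure: partition $[0,T]\times[\tau_0-\varepsilon,\tau_0+\varepsilon]$ into boxes of side $Sv^{-1/H}$; show that $\prob{\sup_{\text{box}}AZ>v}$ is asymptotic to the finite-horizon Pickands functional of the limit field times $\Psi(v/(A\sigma_Z(\tau_{\text{box}})))$; sum over boxes using $\Psi(v/(A\sigma_Z(\tau)))\sim\Psi(v)\exp(-b v^2(\tau-\tau_0)^2)$, so that the transverse sum produces the Laplace factor $\sqrt\pi\,b^{-1/2}$ and the longitudinal sum the factor $T$, the two roughness counts combine into $v^{2/H-1}$, and the rescaling needed to normalize the local fBm variance contributes $a^{2/H}$ with $a=1/(2\tau_0^{2H})$; finally let $S\to\infty$. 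The cross terms of the double sum are shown negligible by the correlation decay $r^*(t)\le\mathcal K t^{-\lambda}$, $\lambda=2-2H>0$, from \eqref{eq:defr*}. For $T=v^{-1/H'}$ with $1>H'>H$ the same argument runs verbatim, since $T/v^{-1/H}=v^{1/H-1/H'}\to\infty$ keeps infinitely many longitudinal boxes and all error estimates uniform.

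The main obstacle is the second display, specifically the identification of the local limit field near the ridge: one must justify that the two endpoint increments of $B_H$ decouple on the roughness scale, and that the sheared limit $B_H^{(1)}(u)-B_H^{(2)}(w)$ is genuinely a product field, since this is exactly what yields $\mathcal H_{B_H}^2$ rather than $\mathcal H_{B_H}$; alongside this one needs uniform control of the double-sum remainders and of the Laplace error, so that the asymptotics persist down to the shrinking horizon $T=v^{-1/H'}$. The off-ridge bound \eqref{eq:tauBigger}, by contrast, is routine once Piterbarg's inequality and the variance Taylor expansion are in hand.
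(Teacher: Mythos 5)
The paper does not prove this lemma: it is imported verbatim from Piterbarg (2001), Lemmas 2 and 4, and is used as a black box throughout, so there is no in-paper argument to compare against. Your outline is nevertheless a faithful high-level reconstruction of the Pickands--Piterbarg double-sum method that underlies Piterbarg's cited proof. For \eqref{eq:tauBigger} you correctly combine the variance Taylor expansion $A\sigma_Z(\tau)=1-b(\tau-\tau_0)^2+O((\tau-\tau_0)^3)$ with a covering argument at the roughness scale $v^{-1/H}$ and Piterbarg's inequality, and handle the far tails $\tau\to 0,\infty$ via Borell--TIS; at $|\tau-\tau_0|\asymp\log v/v$ this indeed yields the factor $\exp(-\tfrac12 v^2-b\log^2 v)$ with a polynomial prefactor absorbed into $v^{2/H}$. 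For the exact asymptotics you correctly isolate the crux: after passing to the sheared variables $(u,w)=(x+y,x)$ the incremental structure of the standardized field near $(s_0,\tau_0)$ becomes $a\bigl(|u-u'|^{2H}+|w-w'|^{2H}\bigr)$, $a=1/(2\tau_0^{2H})$; the two endpoint increments of $B_H$ are separated by the fixed distance $\tau_0$ and hence decorrelate on the local scale $v^{-1/H}$, so the limit field factorizes into two independent fBm's, which is precisely what produces $\mathcal H_{B_H}^2$ rather than a single Pickands constant. The Laplace integral transverse to the ridge gives $\sqrt{\pi}\,b^{-1/2}$, and your remark that for $T=v^{-1/H'}$, $H'>H$, the longitudinal box count $Tv^{1/H}=v^{1/H-1/H'}\to\infty$ is exactly why the asymptotics persist down to a shrinking horizon.

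One point you gloss over, and which is not automatic, is the exponent of $a$: you simply assert that the rescaling ``contributes $a^{2/H}$,'' but a naive application of the factorization $\mathcal H_Y([0,S]^2)\sim (a^{1/(2H)}S\,\mathcal H_{B_H})^2$ to the sheared field appears to give $a^{1/H}$, not $a^{2/H}$. The correct exponent only emerges after carefully tracking the change of variables and the shear Jacobian, together with the normalization of $AZ$ against $\bar Z=Z/\sigma_Z$ through Piterbarg's local lemma; this bookkeeping is precisely what the citation to Piterbarg's Lemma 4 is standing in for. If you wanted to turn your sketch into a self-contained argument, that constant computation is the step that must be carried out explicitly, and it is the step on which the consistency of the paper's downstream formula \eqref{eq:Gf} (which does use $a^{2/H}$ and can be checked against the definitions of $A$, $C_H$, and $f_p$) depends. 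The remaining parts of your proposal --- the covering estimate, the double-sum remainder control via $r^*(t)\le\mathcal K t^{-\lambda}$, and the treatment of the shrinking horizon --- match the standard argument and would go through.
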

\citet[Corollary 2]{Husler04} showed that the above actually holds true for $T$ depending on $v$ such that
$v^{-1/H'}<T<\exp(c v^2)$, for any $H'\in(H,1)$ and $c\in(0,\half)$.

\subsection{Discretization}
\label{sec:disc}
Let $\tau^*(v)=\log v/ v$ and $J(v)=\{\tau: |\tau-\tau_0|\le \tau^*(v)\}$. For a fixed $T,\theta>0$ and some $v>0$, let us define a discretization of the set $[0,T]\times J(v)$ as follows
\begin{align*}
s_l &= lq(v),\quad 0\le l\le L,\quad L = [T/q(v)],\quad q(v) = \theta v^{-\frac{1}{H}},\\
\tau_n &= \tau_0+ nq(v),\quad 0\le |n|\le N,\quad N = [\tau^*(v)/q(v)].
\end{align*}
Along the same lines as in \citep[Lemma 6]{Husler04b} we get the following lemma.
\begin{lem}
\label{lem:discreate_approx}
There exist positive constants $K_1,K_2,v_0>0$, such that, for any $\theta>0$ and $v\ge v_0$,
\begin{align*}
\mathbb P &\left(\max_{\substack{0\le l\le L\\0\le |n|\le N}} AZ(s_l,\tau_n)\le
v - \frac{\theta^{\frac{H}{2}}}{v},
\sup_{\substack{s\in[0,T]\\\tau\in J(v)}} AZ(s,\tau)>v\right)
\le
K_1v^{\frac{2}{H}-1} \Psi(v) \theta^{\frac{H}{2}}
\exp\left(-\theta^{-H}/K_2\right).
\end{align*}
\end{lem}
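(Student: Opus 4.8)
The plan is to follow the discretization scheme of \citet[Lemma 6]{Husler04b}. Write $R_{l,n}=[s_l,s_{l+1}]\times[\tau_n,\tau_{n+1}]$ for the grid cells and let $p$ denote the probability in the lemma. Since the $R_{l,n}$ cover $[0,T]\times J(v)$ (the boundary pieces being treated in the same way), and on the event defining $p$ some cell has $\sup_{R_{l,n}}AZ>v$ while the grid value $AZ(s_l,\tau_n)\le v-\theta^{H/2}/v$, we get
\[
p\le\sum_{l,n}p_{l,n},\qquad p_{l,n}:=\prob{\sup_{R_{l,n}}AZ>v,\ AZ(s_l,\tau_n)\le v-\tfrac{\theta^{H/2}}{v}},
\]
where there are $\asymp T\theta^{-1}v^{1/H}$ values of $l$ and $2N+1$ values of $n$, with $N\asymp\theta^{-1}v^{1/H-1}\log v$.

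For a single cell I would condition on $AZ(s_l,\tau_n)=x$ and use the orthogonal decomposition $AZ(s,\tau)=\beta_{l,n}(s,\tau)\,AZ(s_l,\tau_n)+\chi_{l,n}(s,\tau)$, where $\chi_{l,n}$ is the zero-mean Gaussian field independent of $AZ(s_l,\tau_n)$. The genuinely field-specific input is a pair of uniform (in $l,n$ and $v\ge v_0$) local estimates on cells of side $q(v)=\theta v^{-1/H}$,
\[
\sup_{R_{l,n}}\mathrm{Var}\,\chi_{l,n}\le c_1\theta^{2H}v^{-2},\qquad
\mathbb{E}\sup_{R_{l,n}}\chi_{l,n}\le c_2\theta^{H}v^{-1},
\]
together with the fact that on each cell $\beta_{l,n}$ is so close to $1$ that $x\,\beta_{l,n}(s,\tau)$ stays within $o(\theta^{H/2}/v)$ of $x$ for all $0\le x\le v$. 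All three rest on the single bound $\mathbb{E}\,\bigl(AZ(s,\tau)-AZ(s',\tau')\bigr)^2\le C\bigl(|s-s'|+|\tau-\tau'|\bigr)^{2H}$ on $[0,T]\times J(v)$, obtained by writing $Z(s,\tau)=(B_H(\tau+s)-B_H(s))/(\tau^H\nu(\tau))$, noting that the deterministic factor is smooth and bounded away from $0$ and $\infty$ on $J(v)$ (which shrinks to $\tau_0>0$) and that the double increment $B_H(\tau+s)-B_H(\tau'+s')-B_H(s)+B_H(s')$ has $L^2$-norm at most $(|s-s'|+|\tau-\tau'|)^H+|s-s'|^H$; the expectation estimate is then Dudley's entropy bound, which carries no logarithmic loss since the exponent $H$ renders the entropy integral over a cell scale-invariant. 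Given all this, for $x\le v-\theta^{H/2}/v$ the event $\{\sup_{R_{l,n}}AZ>v\}$ forces $\sup_{R_{l,n}}\chi_{l,n}\ge\tfrac12(v-x)$, and Borell--TIS yields
\[
\prob{\sup_{R_{l,n}}AZ>v\mid AZ(s_l,\tau_n)=x}\le\exp\bigl(-c_0(v-x)^2v^2\theta^{-2H}\bigr),\qquad x\le v-\tfrac{\theta^{H/2}}{v}.
\]

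Integrating this against the $N(0,\sigma_n^2)$ density of $AZ(s_l,\tau_n)$ over $x\le v-\theta^{H/2}/v$, with $\sigma_n=A\sigma_Z(\tau_n)\to 1$, and substituting $z=v(v-x)$, gives
\[
p_{l,n}\le\frac{e^{-v^2/(2\sigma_n^2)}}{\sqrt{2\pi}\,\sigma_n v}\int_{\theta^{H/2}}^{\infty}\exp\bigl(-c_0z^2\theta^{-2H}+z\sigma_n^{-2}\bigr)\,dz\le c\,\Psi(v/\sigma_n)\,\theta^{H}\exp(-c_0'\theta^{-H}),
\]
the last inequality by completing the square — for small $\theta$ the integral is the upper tail, roughly $\theta^{-H/2}$ standard deviations out, of a Gaussian of scale $\theta^{H}$ — and the Mills bound $\Psi(u)\asymp u^{-1}e^{-u^2/2}$. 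For the $n$-sum I would use the Taylor expansion $\sigma_Z(\tau)=\tfrac1A-\tfrac{B}{2A^2}(\tau-\tau_0)^2+O((\tau-\tau_0)^3)$ from \autoref{sec:prelim}, which gives $v/\sigma_n=v+\tfrac{B}{2A}n^2\theta^2v^{1-2/H}+\cdots$ and hence $\Psi(v/\sigma_n)\le\Psi(v)\exp(-c\,n^2\theta^2v^{2-2/H})$; since $2-2/H<0$, the resulting Gaussian series converges, $\sum_{|n|\le N}\Psi(v/\sigma_n)\le c\,\Psi(v)\,\theta^{-1}v^{1/H-1}$, the $\log v$ carried by $N$ dropping out. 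Collecting the three factors, $p\le K_1'T\,\theta^{H-2}v^{2/H-1}\Psi(v)\exp(-c_0'\theta^{-H})$, and since $\theta^{H-2}\exp(-c_0'\theta^{-H})\le K_1''\theta^{H/2}\exp(-\theta^{-H}/K_2)$ for all $\theta>0$ (the exponential absorbing any negative power of $\theta$ as $\theta\downarrow 0$, the remaining range of $\theta$ being bounded), this yields the asserted bound with $K_1$ depending only on the fixed $T$ and on $H$.

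I expect the only genuinely non-routine point to be establishing the trio of local estimates uniformly over all cells and all large $v$ — controlling $\beta_{l,n}$ and $\mathbb{E}\sup_{R_{l,n}}\chi_{l,n}$ also for cells near the $\tau$-boundary of $J(v)$, where the field $Z$ is not stationary. Once these are in hand, the conditioning, the Borell--TIS tail bound, the Gaussian integration and the summation over the grid are routine and parallel the corresponding computations in \citep{Husler04b}.
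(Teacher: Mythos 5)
Your proposal reconstructs the argument of \citep[Lemma 6]{Husler04b}, which is exactly what the paper relies on (the paper provides no proof of its own for this lemma, only the pointer to that reference), and the conditioning/Borell--TIS/Dudley/grid-summation structure is sound. One point worth making precise: the claim that $x\beta_{l,n}(s,\tau)$ stays within $o(\theta^{H/2}/v)$ of $x$ gives $|x\beta_{l,n}-x|=O(\theta^{2H}/v)$, which is dominated by $\theta^{H/2}/v$ only for $\theta$ below a fixed threshold $\theta_0$; for $\theta\ge\theta_0$ one should instead invoke the trivial bound $p\le \prob{\sup_{[0,T]\times J(v)}AZ>v}\le C\,v^{2/H-1}\Psi(v)$ from \autoref{lem:asymptotics}, which is absorbed into the constant $K_1$ since $\theta^{H/2}\exp(-\theta^{-H}/K_2)$ is bounded below on $[\theta_0,\infty)$ — you gesture at this with "the remaining range of $\theta$ being bounded," but the refined single-cell estimate itself is only valid for small $\theta$, not merely the final power-of-$\theta$ bookkeeping.
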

Finally, it is possible to approximate tail asymptotics of the supremum of $Z$
on the strip $[0, T]\times J(v)$ by maximum taken over discrete time points.
The proof of the following lemma follows line-by-line \K{the same a}s the proof of \citep[Lemma 4]{Piterbarg01} and thus we omit it. Similar result can be found in, e.g., \citep[Lemma 7]{Husler04b}.
\begin{lem}
\label{lem:disc_asymp}
For any $T, \theta>0$, as $v\toi$,
\[
\prob{ \max_{\substack{0\le l\le L\\0\le |n|\le N}} AZ(s_l,\tau_n) > v}
=
\sqrt\pi a^\frac{2}{H} b^{-\half}
\left(
\mathcal H_{B_H}^\theta
\right)^2 T v^{\frac{2}{H}-1}\Psi(v)(1+o(1)),
\]
where
$\mathcal H_{B_H}^\theta=\lim_{S\to\infty}
S^{-1}\ee \exp\left(\sup_{t\in\theta \mathbb Z \cap [0,S]}\left(\sqrt 2 B_H(t) - t^{2H}\right)\right)$.
\end{lem}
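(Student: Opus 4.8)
The plan is to run the Pickands double-sum method along the lines of \citep[Lemma 4]{Piterbarg01} (see also \citep[Lemma 7]{Husler04b}): since the asymptotics claimed here differ from those of \autoref{lem:asymptotics} only through the replacement $\mathcal H_{B_H}\mapsto\mathcal H^\theta_{B_H}$, the idea is to repeat that proof with every integral over $(s,\tau)$ replaced by a sum over the lattice $\{(s_l,\tau_n)\}$ and to track where the mesh $\theta$ enters the limit constant. First I would use \eqref{eq:tauBigger} to discard the lattice points with $|\tau_n-\tau_0|>\log v/v$: their total contribution is $\le CTv^{2/H}\exp(-v^2/2-b\log^2v)=o(v^{2/H-1}\Psi(v))$, so it suffices to bound the maximum of $AZ$ over the lattice inside the strip $[0,T]\times J(v)$, i.e.\ over $0\le l\le L$, $0\le|n|\le N$.

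Second, fix a large $S>0$, partition $[0,T]$ into $\approx T/(Sq(v))$ consecutive $s$-blocks of length $Sq(v)$ (a left-over $s$-interval being negligible) and group the $\tau$-lattice into blocks of $\approx S$ points, and apply Bonferroni: the probability in the statement lies between $\sum_{\mathrm{blocks}}p_{\mathrm{block}}$ and $\sum_{\mathrm{blocks}}p_{\mathrm{block}}-\sum_{\mathrm{pairs}}p_{\mathrm{pair}}$, where $p_{\mathrm{block}}$, resp.\ $p_{\mathrm{pair}}$, is the probability that the discrete maximum over one block, resp.\ over a union of two distinct blocks, exceeds $v$. One then shows $\sum_{\mathrm{pairs}}p_{\mathrm{pair}}$ is of smaller order, using the polynomial correlation decay $r^*(t)\le\mathcal Kt^{-\lambda}$, $\lambda=2-2H>0$, of \eqref{eq:defr*}, exactly as in \citep[Lemma 4]{Piterbarg01}.

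The core is the single-block estimate: one must show that, uniformly in the block position,
\[
p_{\mathrm{block}}=\bigl(1+o(1)\bigr)\,c_0\,S^2\bigl(\mathcal H^\theta_{B_H}(S)\bigr)^2\,e^{-bv^2(\tau_n-\tau_0)^2}\,\Psi(v),
\]
where $c_0>0$ is explicit and $\mathcal H^\theta_{B_H}(S):=S^{-1}\ee\exp\bigl(\sup_{t\in\theta\mathbb Z\cap[0,S]}(\sqrt2B_H(t)-t^{2H})\bigr)\to\mathcal H^\theta_{B_H}$ as $S\to\infty$. Two ingredients are needed. First, the single-point tail: on $J(v)$ one has $\prob{AZ(s_l,\tau_n)>v}=\Psi(v)\,e^{-bv^2(\tau_n-\tau_0)^2}(1+o(1))$ uniformly, which follows from $\sigma_Z(\tau)=A^{-1}-\tfrac{b}{A}(\tau-\tau_0)^2+O((\tau-\tau_0)^3)$ together with the asymptotics of $\Psi$; thus $\tau$ enters only as a deterministic penalty and produces no clustering of its own. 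Second, the local structure of the field: with $g(s,\tau):=B_H(s+\tau)-B_H(s)$ one has $AZ(s,\tau)=\tau^{-H}(A/\nu(\tau))\,g(s,\tau)$, and passing to the endpoint coordinates $(w_1,w_2):=(s,s+\tau)$, in which the lattice $\{(s_l,\tau_n)\}$ has mesh $q(v)$ along each of the $w$-axes, the increments of $g$ satisfy $\mathrm{Var}(g(w)-g(w'))=|w_1-w_1'|^{2H}+|w_2-w_2'|^{2H}+o(v^{-2})$ on the Pickands scale (increments of order $v^{-1/H}$), the cross-covariance being only $O(|w_1-w_1'|\,|w_2-w_2'|)=o(v^{-2})$ for $H<1$. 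Hence, after the Pickands rescaling, $AZ$ looks locally like a sum of two \emph{independent} fractional Brownian motions of index $H$ sampled on a $\theta$-spaced lattice, and the generalized Pickands lemma for this product field delivers the factor $(\mathcal H^\theta_{B_H}(S))^2$; this is the origin of both the square and the single shared parameter $\theta$ in the statement.

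Finally, summing the single-block estimates contributes a factor proportional to $Tv^{1/H}$ from the $s$-blocks, while $\sum_n q(v)\,e^{-bv^2(\tau_n-\tau_0)^2}$ is a Riemann sum converging to $\int_{\mathbb R}e^{-bv^2x^2}\,\D x=v^{-1}\sqrt{\pi/b}$ (a valid approximation because the mesh $q(v)=\theta v^{-1/H}$ is $\ll v^{-1}$, the width of the integrand, precisely since $H<1$); combined with $\Psi(v)$ and the rescaling constants this assembles the stated prefactor $\sqrt\pi a^{2/H}b^{-1/2}(\mathcal H^\theta_{B_H}(S))^2\,Tv^{2/H-1}\Psi(v)$, and a final passage $S\to\infty$ replaces $\mathcal H^\theta_{B_H}(S)$ by $\mathcal H^\theta_{B_H}$ and removes the pair-sum contribution. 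I expect the single-block step to be the main obstacle: establishing the two-independent-fBm local approximation with an error bound uniform over all blocks and all large $v$, and checking that the mesh survives the coordinate change and the Pickands rescaling as a genuine $\theta\mathbb Z$-lattice rather than a rescaled copy of it. The remaining points — block counting, the Laplace/Riemann approximation in $\tau$, and the Bonferroni pair bound — are routine and follow \citep[Lemma 4]{Piterbarg01} verbatim.
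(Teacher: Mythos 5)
Your plan is the paper's own plan: the proof is deliberately omitted in the text, which states that it ``follows line-by-line the same as the proof of [Piterbarg01, Lemma 4]'' (pointing also to [Husler04b, Lemma 7] for the analogous discrete statement), and your sketch --- strip reduction via the exponential bound for $|\tau-\tau_0|>\log v/v$, Bonferroni over $s$-blocks with the pair sum controlled by $r^*(t)\le\mathcal K t^{-\lambda}$, the two-independent-fBm local increment structure in endpoint coordinates, and the Laplace/Riemann sum in $\tau$ producing $\sqrt{\pi/b}\,v^{-1}$ --- is precisely that argument transcribed with integrals replaced by sums over the $q(v)$-lattice. The subtlety you flag at the end, that the mesh must survive the coordinate change and Pickands rescaling as a genuine $\theta\mathbb Z$-lattice so that $\mathcal H^\theta_{B_H}$ rather than some rescaled-mesh constant emerges, is indeed the one nontrivial bookkeeping step distinguishing the discrete from the continuous case, and it is what the choice $q(v)=\theta v^{-1/H}$ (the Pickands scale of the local index-$H$ structure), together with the self-similarity of $B_H$ that routes the local constant $a$ into the prefactor $a^{2/H}$ rather than into the lattice spacing, is designed to deliver.
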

It follows easily that $\mathcal H_{B_H}^\theta\to \mathcal H_{B_H}$ as $\theta \to 0$,
so that the above asymptotics \K{is} the same as in \autoref{lem:asymptotics}
when the discretization parameter $\theta$ decreases to zero so that the number of discretization points grows to infinity.

\section{Auxiliary Lemmas}
\label{sec:auxLem}
We begin with some auxiliary lemmas that are later needed in the proofs.
The first lemma is \K{a slightly modified version of} \citep[Theorem 4.2.1]{Leadbetter83}.
\begin{lem}[Berman's inequality]
Suppose \K{that}
$\xi_1,\ldots,\xi_n$ are 
normal \K{random} variables with
\K{correlation} matrix $\Lambda^1=(\Lambda^1_{i,j})$
and
$\eta_1,\ldots,\eta_n$ similarly with \K{correlation}
matrix $\Lambda^0=(\Lambda_{i,j}^0)$.
Let
\K{$\sigma(\xi_i)=\sigma(\eta_i)\in (0,1]$,}
$\rho_{i,j}=\max(|\Lambda_{i,j}^1|,|\Lambda_{i,j}^0|)$
\K{and $u_i$ be real numbers, $i=1,\ldots,n$}.
Then,
\begin{align*}
\mathbb P\left(
\bigcap_{j=1}^n
\{\xi_j\le u_j\}\right) &- \prob{\bigcap_{j=1}^n\{\eta_j\le u_j\}}
\\
&\le
\frac{1}{2\pi}\sum_{1\le i<j\le n}\left(\Lambda_{i,j}^1-\Lambda_{i,j}^0\right)^+
(1-\rho_{i,j}^2)^{-\half}\exp\left(-\frac{u_i^2+u_j^2}{2(1+\rho_{i,j})}\right).
\end{align*}
\end{lem}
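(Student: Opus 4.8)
The plan is to reduce the multivariate comparison to a one‑parameter interpolation between the two correlation structures and to control the derivative along the path by a sum of bivariate terms. Concretely, for $h\in[0,1]$ let $\Lambda^h = (1-h)\Lambda^0 + h\Lambda^1$; since $\Lambda^0$ and $\Lambda^1$ are correlation matrices and the set of nonnegative‑definite matrices is convex, each $\Lambda^h$ is a legitimate correlation matrix, so there is a centered Gaussian vector $(\zeta_1^h,\dots,\zeta_n^h)$ with unit variances and covariance $\Lambda^h$ (the assumption $\sigma(\xi_i)=\sigma(\eta_i)\in(0,1]$ is handled by first rescaling: one replaces $u_i$ by $u_i/\sigma(\xi_i)$, which only makes the Gaussian exponents in the bound larger, hence the stated inequality with the original $u_i$ follows a fortiori). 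Define
\[
F(h) = \prob{\bigcap_{j=1}^n\{\zeta_j^h\le u_j\}},
\]
so that the left‑hand side of the claimed inequality is $F(1)-F(0)=\int_0^1 F'(h)\,\D h$.

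The second step is the classical differentiation identity: writing $\phi_h$ for the density of $(\zeta_i^h,\zeta_j^h)$, one has
\[
F'(h) = \sum_{1\le i<j\le n}\bigl(\Lambda^1_{i,j}-\Lambda^0_{i,j}\bigr)
\int \partial_{x_i}\partial_{x_j}\phi_h\bigl(x_i,x_j\bigr)\,\prob{\textstyle\bigcap_{k\ne i,j}\{\zeta_k^h\le u_k\}\,\big|\,\zeta_i^h=x_i,\zeta_j^h=x_j}\,\D x_i\,\D x_j,
\]
which comes from the heat‑type equation $\partial_h\phi_h = \sum_{i<j}(\Lambda^1_{i,j}-\Lambda^0_{i,j})\,\partial_{x_i}\partial_{x_j}\phi_h$ for the bivariate Gaussian densities together with dominated convergence to justify differentiating under the integral sign. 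Retaining only the positive part $(\Lambda^1_{i,j}-\Lambda^0_{i,j})^+$ (the negative terms only help, since the conditional probability factor is nonnegative and $\int\partial_{x_i}\partial_{x_j}\phi_h = $ a bounded quantity after integrating the conditional probability against it), and bounding the conditional probability crudely by $1$, we are left with estimating $\int_{-\infty}^{u_i}\int_{-\infty}^{u_j}\partial_{x_i}\partial_{x_j}\phi_h(x_i,x_j)\,\D x_j\,\D x_i = \phi_h(u_i,u_j)$, the value of the bivariate density at the corner. Plugging in the explicit bivariate normal density with correlation $\Lambda^h_{i,j}$ gives
\[
\phi_h(u_i,u_j) = \frac{1}{2\pi\sqrt{1-(\Lambda^h_{i,j})^2}}\exp\left(-\frac{u_i^2+u_j^2-2\Lambda^h_{i,j}u_iu_j}{2(1-(\Lambda^h_{i,j})^2)}\right).
\]

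The third step is to bound this uniformly in $h$ by the quantity appearing in the statement. Since $\Lambda^h_{i,j}$ lies between $\Lambda^0_{i,j}$ and $\Lambda^1_{i,j}$, we have $|\Lambda^h_{i,j}|\le\rho_{i,j}$, so $1-(\Lambda^h_{i,j})^2\ge 1-\rho_{i,j}^2$ and the prefactor is at most $(2\pi)^{-1}(1-\rho_{i,j}^2)^{-1/2}$. For the exponent one uses the elementary inequality $u_i^2+u_j^2-2\rho u_iu_j \ge (1-|\rho|)(u_i^2+u_j^2)$ valid for $|\rho|\le 1$, together with $1-(\Lambda^h_{i,j})^2 = (1-\Lambda^h_{i,j})(1+\Lambda^h_{i,j})\le 2(1+\rho_{i,j})\cdot\tfrac12$... more precisely one shows $\frac{u_i^2+u_j^2-2\Lambda^h_{i,j}u_iu_j}{2(1-(\Lambda^h_{i,j})^2)}\ge \frac{u_i^2+u_j^2}{2(1+\rho_{i,j})}$, which after clearing denominators reduces to $(1+\Lambda^h_{i,j})(u_i^2+u_j^2)-2(1+\Lambda^h_{i,j})\Lambda^h_{i,j}u_iu_j\ge(1-(\Lambda^h_{i,j})^2)(u_i^2+u_j^2)/(1)$... i.e. to $(\Lambda^h_{i,j}u_i-u_j)^2 + (\text{nonneg})\ge0$ after rearrangement — an entirely elementary check. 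Combining, $\phi_h(u_i,u_j)\le (2\pi)^{-1}(1-\rho_{i,j}^2)^{-1/2}\exp\bigl(-(u_i^2+u_j^2)/(2(1+\rho_{i,j}))\bigr)$, uniformly in $h$; integrating $F'(h)$ over $h\in[0,1]$ then yields exactly the asserted bound.

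The main obstacle is purely technical rather than conceptual: justifying the differentiation identity for $F'(h)$ — i.e., interchanging $\partial_h$ with the $n$‑fold integral defining $F(h)$, and handling possible degeneracies when $\Lambda^h_{i,j}=\pm1$ for some $h$ (where the bivariate density is singular). The standard fix is to first prove the inequality under the strict assumption $|\Lambda^0_{i,j}|,|\Lambda^1_{i,j}|<1$ — then $\Lambda^h$ is nonsingular on all two‑dimensional marginals and all densities are smooth with Gaussian tails, so dominated convergence applies — and then pass to the general case by a limiting argument (replace $\Lambda^\bullet$ by $(1-\epsilon)\Lambda^\bullet + \epsilon I$, $\epsilon\downarrow0$), noting the right‑hand side is continuous in the $\rho_{i,j}$ away from $1$ and the left‑hand side is continuous in the correlations. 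Everything else is the elementary bivariate estimate above.
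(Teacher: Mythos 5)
The paper offers no proof of this lemma; it is cited directly from \citep[Theorem~4.2.1]{Leadbetter83}, the classical Normal Comparison Lemma, and your proposal reconstructs exactly that classical argument: linear interpolation $\Lambda^h=(1-h)\Lambda^0+h\Lambda^1$, the Plackett identity $\partial\phi/\partial\Lambda_{ij}=\partial_{x_i}\partial_{x_j}\phi$, the pointwise bound on the bivariate corner density, discarding the negative increments, and a perturbation to avoid singular $\Lambda^h_{ij}=\pm1$. The argument is correct, including your reduction of the ``slightly modified'' version with $\sigma(\xi_i)=\sigma(\eta_i)\in(0,1]$ to the unit-variance case: rescaling replaces $u_i$ by $u_i/\sigma(\xi_i)$, whose absolute value is no smaller, which only shrinks the exponential and so proves the stated bound a fortiori. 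The one place the write-up is loose is the differentiation step, where $\phi_h$ is first introduced as the bivariate marginal of $(\zeta_i^h,\zeta_j^h)$ while the Plackett-type identity you then invoke is for the full $n$-variate density; after integrating $\partial_{x_i}\partial_{x_j}$ over the box this collapses, as you intend, to the bivariate density at $(u_i,u_j)$ times a conditional probability bounded by $1$, and the elementary exponent estimate you sketch reduces cleanly to $|\Lambda^h_{ij}|(u_i^2+u_j^2)\ge 2\Lambda^h_{ij}u_iu_j$, which is AM--GM.
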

The following lemma is a general form of the Borel-Cantelli lemma; cf. \citep{Spitzer64}.
\begin{lem}[Borel-Cantelli lemma]
Consider a sequence of \K{events} $\{E_k\}_{k=0}^\infty$. If
\[
\sum_{k=0}^\infty \prob{E_k} < \infty,
\]
then $\prob{E_n\text{ i.o.}} = 0$. Whereas, if
\[
\sum_{k=0}^\infty \prob{E_k} = \infty\quad\text{and}\quad
\liminf_{n\toi}\frac{\sum_{1\le k\ne t\le n}\prob{E_k E_t}}{\left(\sum_{k=1}^n\prob{E_k}\right)^2}\le 1,
\]
then $\prob{E_n\text{ i.o.}} = 1$.
\end{lem}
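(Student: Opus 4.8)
The plan is to treat the two halves of the statement separately: the convergence half is the elementary first Borel--Cantelli lemma, and the divergence half is a Kochen--Stone second-moment argument. For the convergence part I would write $\{E_n\text{ i.o.}\}=\bigcap_{N}\bigcup_{k\ge N}E_k$, so that $\prob{E_n\text{ i.o.}}\le\prob{\bigcup_{k\ge N}E_k}\le\sum_{k\ge N}\prob{E_k}$ for every $N$; letting $N\toi$ and invoking the convergence of $\sum_k\prob{E_k}$ gives $\prob{E_n\text{ i.o.}}=0$.

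For the divergence part, fix $N$ and put $S_{N,n}=\sum_{k=N}^{n}\mathbf 1_{E_k}$, a nonnegative random variable with $\ee S_{N,n}=\sum_{k=N}^n\prob{E_k}$ and $\ee S_{N,n}^2=\sum_{k=N}^n\prob{E_k}+\sum_{N\le k\ne t\le n}\prob{E_kE_t}$. The first key step is the Cauchy--Schwarz (Paley--Zygmund) bound obtained from $\ee S_{N,n}=\ee\bigl(S_{N,n}\mathbf 1_{\{S_{N,n}>0\}}\bigr)\le(\ee S_{N,n}^2)^{1/2}\,\prob{S_{N,n}>0}^{1/2}$, namely $\prob{S_{N,n}>0}\ge(\ee S_{N,n})^2/\ee S_{N,n}^2$. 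Writing $a_n=\sum_{k=1}^n\prob{E_k}$ and $b_n=\sum_{1\le k\ne t\le n}\prob{E_kE_t}$, and bounding the truncated double sum by the full one, this gives
\[
\prob{S_{N,n}>0}\ge\frac{(a_n-a_{N-1})^2}{(a_n-a_{N-1})+b_n}.
\]

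The second key step uses the hypothesis $\liminf_n b_n/a_n^2\le1$ to select a subsequence $n_j\toi$ along which $b_{n_j}/a_{n_j}^2\to L$ for some $L\in[0,1]$; since $\sum_k\prob{E_k}=\infty$ forces $a_n\toi$, dividing numerator and denominator of the last display by $a_{n_j}^2$ yields $\liminf_j\prob{S_{N,n_j}>0}\ge 1/L$ (with the convention $1/0:=+\infty$), which, being the liminf of a sequence of probabilities, forces $\prob{S_{N,n_j}>0}\to1$. As $\{S_{N,n}>0\}$ increases in $n$ to $\bigcup_{k\ge N}E_k$, we conclude $\prob{\bigcup_{k\ge N}E_k}=1$ for every $N$, and hence $\prob{E_n\text{ i.o.}}=\lim_N\prob{\bigcup_{k\ge N}E_k}=1$. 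The only subtlety — the ``main obstacle'', though a mild one — is that the hypothesis controls $b_n/a_n^2$ only along a subsequence, so the argument must be run on the tail events $\bigcup_{k\ge N}E_k$ with $N$ fixed and $n$ restricted to that subsequence; replacing $\sum_{N\le k\ne t\le n}\prob{E_kE_t}$ by the full sum $b_n$ is exactly what keeps this step harmless, since $a_n-a_{N-1}\sim a_n\toi$.
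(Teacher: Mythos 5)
Your proof is correct, but there is nothing in the paper to compare it against: the paper states this lemma without proof, quoting it as a known result with a reference to Spitzer. Your argument is exactly the standard proof of that cited result --- the first Borel--Cantelli lemma for the convergence half, and the Chung--Erd\H{o}s/Kochen--Stone second-moment (Paley--Zygmund) bound applied to the tail sums $S_{N,n}$ for the divergence half, with the truncated double sum majorized by the full one so that the fixed lower index $N$ is harmless. One cosmetic remark: the case $L=0$ in your subsequence step cannot actually occur, since Cauchy--Schwarz applied to $S_{1,n}$ gives $b_n\ge a_n^2-a_n$, hence $\liminf_n b_n/a_n^2\ge 1$ and the hypothesis really forces $L=1$; your convention $1/0=+\infty$ papers over a vacuous case, so this is not a gap.
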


\begin{lem}
\label{lem:bound}
For any $\varepsilon\in(0,1)$, there exist positive constants $K$ and $\rho$ depending only on $\varepsilon, H, p$ and $\lambda$ such that
\[
\prob{\sup_{S< t\le T} \frac{Q_{B_H}(t)}{f_p(t)}\le 1}
\le
\exp
\left(
-\frac{(1-\varepsilon)}{(1+\varepsilon)}
\int_{S+f_p(S)}^{T}
\frac{1}{f_p(u)}
\prob{\sup_{t\in [0,f_p(u)]} Q_{B_H}(t) >  f_p(u)}\D u
\right) + K S^{-\rho},
\]
for any $T-f_p(S)\ge S\ge K$, with $f_p(T)/f_p(S)\le \mathcal C$ and $\mathcal C$ being some universal positive constant.
\end{lem}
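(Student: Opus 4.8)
Write $g(u):=\frac{1}{f_p(u)}\prob{\sup_{t\in[0,f_p(u)]}Q_{B_H}(t)>f_p(u)}$ for the integrand of $\mathscr I_{f_p}$; by \eqref{eq:Gf} it is of order $(u\log^{1-p}u)^{-1}$, and with $v_{f_p}(u)=Af_p(u)^{1-H}$ one has $v_{f_p}(u)^2=2\log u+2(1+C_H-p)\log_2u$, so $e^{-v_{f_p}(u)^2/2}$ is of order $(u\log^{1+C_H-p}u)^{-1}$; moreover the hypothesis $f_p(T)/f_p(S)\le\mathcal C$ forces $\log T\le\mathcal C^{2(1-H)}\log S+O(1)$, i.e. $T$ is bounded by a fixed power of $S$. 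The plan is the classical blocking-plus-Berman scheme. First I would partition $(S,T]$ into consecutive blocks $I_i=[a_i,b_i]$, $i=1,\dots,m$, with $a_1=S$ and $b_i-a_i$ of order $\beta_S f_p(a_i)$ for a slowly growing $\beta_S\to\infty$, separated by gaps of $Q$-length of order $\gamma_S f_p(a_i)$ with $\gamma_S\to\infty$ but $\gamma_S/\beta_S\to0$ (say $\gamma_S=(\log S)^{1/\lambda}$, $\beta_S=(\log S)^{2/\lambda}$): the cumulative length of the gaps is then negligible, while any two distinct blocks are, in the spatial variable $s$ of the Gaussian field $Z$ of \eqref{eq:basic_transformation} --- which rescales $Q$-time by $f_p$ --- separated by a distance $\ge\gamma_S\to\infty$. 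Since $f_p$ is nondecreasing,
\[
\{\sup_{S<t\le T}\tfrac{Q_{B_H}(t)}{f_p(t)}\le1\}\subseteq\bigcap_{i=1}^m\{\sup_{t\in I_i}Q_{B_H}(t)\le f_p(b_i)\}\subseteq\bigcap_{i=1}^m\widehat A_i,
\]
where $\widehat A_i:=\{\max_{t\in\mathcal G_i}Q_{B_H}(t)\le f_p(b_i)\}$, $\mathcal G_i$ being the discretization grid of \autoref{sec:disc} inside $I_i$ (with parameter $\theta$, and $\tau$ restricted to $J(v_i)$, the error from $|\tau-\tau_0|\ge\log v_i/v_i$ being absorbed via \eqref{eq:tauBigger}). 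By \eqref{eq:basic_transformation} and self-similarity each $\widehat A_i$ has the form $\{\max_{s_l,\tau_n}AZ(s_l,\tau_n)\le v_i\}$ with a single threshold $v_i:=v_{f_p}(b_i)\ge v_{f_p}(S)$.

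Next I apply Berman's inequality with $\Lambda^0$ block-diagonal --- equal to $\Lambda^1$ within a block, $0$ across blocks --- so that $\prob{\bigcap_i\widehat A_i}\le\prod_i\prob{\widehat A_i}+R$ with $R$ the Berman remainder. For the product term, \autoref{lem:disc_asymp} gives, uniformly in $i$ for $S\ge K$,
\[
\prob{\widehat A_i^c}=\sqrt\pi\,a^{2/H}b^{-1/2}(\mathcal H_{B_H}^\theta)^2\,\frac{b_i-a_i}{f_p(b_i)}\,v_i^{2/H-1}\Psi(v_i)(1+o(1)),
\]
the uniformity coming from H\"usler's time-horizon--uniform version of \autoref{lem:asymptotics} (applicable since $v_i^{-1/H'}\le b_i-a_i\le e^{cv_i^2}$ and $v_i/v_{f_p}(S)\in[1,\mathcal C^{1-H}]$), while \autoref{lem:asymptotics} together with \eqref{eq:tauBigger} gives the same with $\mathcal H_{B_H}$ replacing $\mathcal H_{B_H}^\theta$ for $\prob{\sup_{t\in[0,b_i-a_i]}Q_{B_H}(t)>f_p(b_i)}=(b_i-a_i)g(b_i)(1+o(1))$ (the last equality by the time-horizon linearity already contained in \eqref{eq:actualAsympt}). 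Since $\mathcal H_{B_H}^\theta\to\mathcal H_{B_H}$ as $\theta\to0$, fix $\theta=\theta(\varepsilon)$ so small that $\prob{\widehat A_i^c}\ge(1-\varepsilon_1)(b_i-a_i)g(b_i)$; then, using that $g$ and $f_p$ vary slowly relative to $u$ (so $\int_{a_i}^{b_i}g\le(1+o(1))(b_i-a_i)g(b_i)$) and that the gaps contribute negligibly to $\int_S^Tg$, a Riemann-sum estimate yields $\sum_i\prob{\widehat A_i^c}\ge\tfrac{1-\varepsilon}{1+\varepsilon}\int_{S+f_p(S)}^Tg(u)\D u$ for $S\ge K$, whence
\[
\prod_i\prob{\widehat A_i}\le\exp\left(-\sum_i\prob{\widehat A_i^c}\right)\le\exp\left(-\frac{1-\varepsilon}{1+\varepsilon}\int_{S+f_p(S)}^Tg(u)\D u\right).
\]

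The real work is to show $R\le KS^{-\rho}$. Every threshold being $\ge v_{f_p}(S)$, every factor $e^{-(u_k^2+u_{k'}^2)/2}$ is at most of order $(S\log^{1+C_H-p}S)^{-2}$; and the total number of grid points is polynomial in $S$, block $i$ carrying $O(\beta_S v_i^{2/H}\log v_i)$ of them and there being $O(S^{\mathcal C^{2(1-H)}}\beta_S^{-1}f_p(S)^{-1})$ blocks. Each cross-block pair has $s$-separation $d_{k,k'}\ge\gamma_S$, so \eqref{eq:defr*} gives $\rho_{k,k'}\le r^*(d_{k,k'})\le\mathcal K d_{k,k'}^{-\lambda}$; hence $(1-\rho_{k,k'}^2)^{-1/2}\le2$ for $S\ge K$, and, since $\gamma_S^{-\lambda}\log S=O(1)$, also $e^{-\frac{u_k^2+u_{k'}^2}{2(1+\rho_{k,k'})}}\le e^{O(1)}e^{-(u_k^2+u_{k'}^2)/2}$. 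So $R\lesssim\sum_{(k,k')\text{ cross}}d_{k,k'}^{-\lambda}e^{-(u_k^2+u_{k'}^2)/2}$; grouping by blocks and bounding $\sum_{k\in I_i,k'\in I_j}d_{k,k'}^{-\lambda}$ by (number of points in $I_i$)$\times$(number in $I_j$)$\times\Delta_{ij}^{-\lambda}$, where $\Delta_{ij}\ge\gamma_S+(|i-j|-1)\beta_S$ is the $s$-distance between $I_i$ and $I_j$, the sum is summable in $|i-j|$ (with the harmless split $\lambda>1$, $\lambda<1$, and $\lambda=1$, i.e. $2H=1$, where the cross-block correlations vanish and $R=0$), leaving, after inserting the above counts and $e^{-v_{f_p}(S)^2/2}=O((S\log^{1+C_H-p}S)^{-1})$, a bound of order $S^{c(H)\mathcal C^{2(1-H)}-2}$ up to polylogarithmic factors, where $c(H)=\max\{1,2H\}$. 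Choosing $\mathcal C$ with $c(H)\,\mathcal C^{2(1-H)}<2$ --- this being the admissible range that fixes the ``universal constant'' $\mathcal C$ in terms of $H$ --- this is $\le KS^{-\rho}$ for some $\rho>0$, and combining with the bound on $\prod_i\prob{\widehat A_i}$ finishes the proof. I expect this last estimate of $R$ to be the delicate point: keeping all $o(1)$'s uniform over $i$, choosing $\beta_S$ and $\gamma_S$ compatibly, and extracting the sharp admissible $\mathcal C$; the structural facts that make it close are the polynomial-in-$S$ bound on $T$ from $f_p(T)/f_p(S)\le\mathcal C$, the polynomial correlation decay \eqref{eq:defr*}, and H\"usler's time-horizon--uniform refinement of \autoref{lem:asymptotics}.
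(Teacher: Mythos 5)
Your proposal follows the same overall strategy as the paper — blocking, discretization as in \autoref{sec:disc}, Berman's inequality to split into a product term and a correlation remainder, with \autoref{lem:disc_asymp} giving the product and \eqref{eq:defr*} giving the remainder — and it is essentially correct, but the blocking scheme and the remainder analysis differ in a way worth noting. The paper takes blocks of length $y_k=f_p(s_k)$ separated by gaps of length $\varepsilon x_k=\varepsilon f_p(t_k)$ (a \emph{fixed} proportion of $f_p$); because adjacent blocks are then only order-$\varepsilon f_p$ apart, it must split the cross-block pairs into near ones ($k-t\le s_0$), handled by the bound $|r|\le\zeta<1$ from \eqref{eq:asymptotic}, and far ones, handled by the decay $r^*((k-t)\varepsilon)\le\mathcal K(k-t)^{-\lambda}$. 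Your choice of $\beta_S,\gamma_S\toi$ with $\gamma_S/\beta_S\to 0$ cleanly eliminates the near/far dichotomy, since every cross-block pair has normalized separation $\ge\gamma_S\toi$; that is a genuine simplification. The price is that in bounding $R$ you use the uniform worst-case threshold $v_{f_p}(S)$, so $e^{-(u_k^2+u_{k'}^2)/2}\lesssim S^{-2}$ for every pair, and you must then compensate the polynomial-in-$S$ number of grid points via the constraint $\max(1,2H)\,\mathcal C^{2(1-H)}<2$. The paper avoids any restriction on $\mathcal C$ by keeping the \emph{actual} increasing thresholds: since $e^{-v_k^2/2}=(t_k\log^{1+C_H-p}t_k)^{-1}$ and $t_k\gtrsim S+k$, the sums
\[
\sum_{k\ge[S]}k^{-\frac{2}{1+\sqrt\zeta}},\qquad
\sum_{[S]\le t<k}k^{-\frac{1}{1+\lambda/2}}t^{-\frac{1}{1+\lambda/2}}(k-t)^{-\lambda}
\]
converge with tails $O(S^{-\rho})$ for a $\rho$ depending only on $\zeta,\lambda$. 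This sharper remainder estimate is not merely cosmetic: the paper re-uses the very same $P_2$-type calculation inside the proof of \autoref{thm:equiv}, where one needs $|\prob{\cap_{k=m}^n E_k}-\prod_{k=m}^n\prob{E_k}|\le K(S+m)^{-\rho}$ \emph{uniformly in $n$}, while $f(t_n)/f(t_m)$ grows without bound (even under the normalization \eqref{eq:frestricted}); your $\mathcal C$-dependent bound would not transfer there. That said, your argument is easily tightened — replacing the flat $S^{-2}$ by the actual $t_k^{-1}$-type decay, exactly as in the paper's $P_2$ estimate, removes the $\mathcal C$ constraint — so the gap is one of sharpness, not of validity.
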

\begin{proof}
Let $\varepsilon\in(0,1)$ be some positive constant. For the reminder of the proof let $K$ and $\rho$ be two positive constants depending only on $\varepsilon, H, p$ and $\lambda$ that may differ from line to line. For any $k\ge 0$ put $s_0 = S$, $y_0 = f_p(s_0)$, $t_0 = s_0 + y_0$, $x_0 = f_p(t_0)$ and
\begin{align}
\nonumber
s_k & = t_{k-1} + \varepsilon x_{k-1},
\quad y_k = f_p(s_k),
\quad t_k = s_k + y_k,
\quad x_k = f_p(t_k),\\
\label{eq:defv}
\quad I_k &=(s_k,t_k],
\quad v_k = A x_k^{1-H} = v_{f_p}(t_k),
\quad \tilde I_k = \frac{I_k}{x_k} = (\tilde s_k, \tilde t_k],
\quad |\tilde I_k| = \frac{y_k}{x_k}.
\end{align}
From this construction, it is easy to see that the intervals $I_k$ are disjoint. Furthermore, $\delta(I_k,I_{k+1})=\varepsilon x_k$, and $1-\varepsilon\le y_k/x_k\le 1$, for any $k\ge0$ and sufficiently large $S$.
Note that, for any $k\ge 0$, $|I_k|\sim f_p(S)$ as $S$ grows large,
therefore if $T(S,\varepsilon)$ is the smallest number of intervals $\{I_k\}$ needed to cover $[S,T]$, then $T(S,\varepsilon)\le[(T-S)/(f_p(S)(1+\varepsilon))]$. Moreover, since $f_p(T)/f_p(S)$ is bounded by the constant $\mathcal C>0$ not depending on $S$ and $\varepsilon$, it follows that, $x_k/x_t\le \mathcal C$ for any $0\le t<k\le T(S,\varepsilon)$.

Now let us introduce a discretization of the set $\tilde I_k\times J(v_k)$ as in \autoref{sec:disc}.
That is, for some $\theta>0$, define grid points
\begin{align*}
\label{eq:disc100}
s_{k,l} &= \tilde s_k + lq_k,\quad 0\le l\le L_k,\quad  L_k = [(1-\varepsilon)/ q_k],
\quad q_k  = \theta v_k^{-\frac{1}{H}},\\
\tau_{k,n} &= \tau_0+ nq_k, \quad 0\le |n| \le N_k,\quad N_k = [\tau^*(v_k)/q_k].
\end{align*}
Since $f_p$ is an increasing function, it easily follows that,
\begin{align*}
\mathbb P  &\Bigg(\sup_{S< t\le T}\frac{Q_{B_H}(t)}{f_p(t)}\le 1\Bigg)
\le
\prob{\bigcap_{k=0}^{T(S,\varepsilon)}\left\{\sup_{t \in I_k} Q_{B_H}(t) \le x_k\right\}}
\le
\prob{\bigcap_{k=0}^{T(S,\varepsilon)}\left\{\sup_{\substack{s \in I_k/x_k \\ \tau\in J(v_k)}} AZ_{x_k}(s,\tau) \le v_k\right\}}\\
&\le
\prob{\bigcap_{k=0}^{T(S,\varepsilon)}\left\{\max_{\substack{ 0\le l\le L_k\\\quad 0\le |n| \le N_k}} AZ_{x_k}(s_{k,l},\tau_{k,n})\le v_k\right\}}\\
&\le
\prod_{k=0}^{T(S,\varepsilon)}\prob{\max_{\substack{ 0\le l\le L_k\\\quad 0\le |n| \le N_k}} AZ_{x_k}(s_{k,l},\tau_{k,n})\le v_k}
+\sum_{0\le t<k\le T(S,\varepsilon)}C_{k,t} =: P_1 + P_2,
\end{align*}
where the last inequality follows from Berman's inequality with
\[
C_{k,t}=
 \sum_{\substack{0\le l\le L_k\\0\le p\le L_t}}\sum_{\substack{|n|\le N_k\\|m|\le N_t}}
\frac{|r_{x_k,x_t}(s_{k,l},\tau_{k,n},s_{t,p},\tau_{t,m})|}{\sqrt{1- r^2_{x_k,x_t}(s_{k,l},\tau_{k,n},s_{t,p},\tau_{t,m})}}
\exp\left(-\frac{\half(v_k^2+v_t^2)}{1+|r_{x_k,x_t}(s_{k,l},\tau_{k,n},s_{t,p},\tau_{t,m})|}\right).
\]

\noindent\textit{Estimate of $P_1$.}

\vb

Note that we can use the fact that $Z_{x_k}$ has the same distribution as
$Z_1\equiv Z$ for any $x_k$.
Since the process $Z$ is stationary \K{with respect to} the first variable, from \autoref{lem:disc_asymp}, for any $\varepsilon\in(0,1)$, sufficiently large $S$ and small $\theta$,
\begin{align*}
P_1 &\le
\exp\left(
-\sum_{k=0}^{T(S,\varepsilon)} \prob{\max_{\substack{ 0\le l\le L_k\\\quad 0\le |n| \le N_k}} AZ_{x_k}(s_{k,l},\tau_{k,n}) >  v_k}
\right)\\
&\le
\exp\left(
-(1-\frac{\varepsilon}{4})\sum_{k=0}^{T(S,\varepsilon)} \prob{\sup_{(s,\tau)\in \tilde I_k\times J(v_k)} AZ (s,\tau) >  v_k}
\right)
\end{align*}
Then, by \eqref{eq:basic_transformation} combined with \eqref{eq:actualAsympt},
\begin{align*}
P_1
&\le
\exp\left(
-(1-\frac{\varepsilon}{2})\sum_{k=0}^{T(S,\varepsilon)} \prob{\sup_{\substack{s\in \tilde I_k\\\tau\ge 0}} AZ (s,\tau) >  v_k}
\right)\\
&=
\exp\left(
-(1-\frac{\varepsilon}{2})\sum_{k=0}^{T(S,\varepsilon)} \prob{\sup_{t\in [0,\frac{y_k}{x_k}f_p(t_k)]} Q_{B_H}(t) >  f_p(t_k)}
\right)\\
&\le
\exp\left(
-(1-\varepsilon)\sum_{k=0}^{T(S,\varepsilon)} \prob{\sup_{t\in [0,f_p(t_k)]} Q_{B_H}(t) >  f_p(t_k)}
\frac{f_p(s_k)}{f_p(t_k)}
\right)\\
&\le
\exp\left(
-\frac{1-\varepsilon}{1+\varepsilon}\int_{S+f_p(S)}^{T}
\frac{1}{f_p(u)}\prob{\sup_{t\in [0,f_p(u)]} Q_{B_H}(t) >  f_p(u)}\D u
\right).
\end{align*}

\noindent\textit{Estimate of $P_2$.}

\vb

For any $0\le t<k\le T(S,\varepsilon)$, $0\le l\le L_k$, $0\le p\le L_t$, we have
\begin{align*}
x_ks_{k,l}- x_ts_{t,p}
&=
(s_k+ x_k l q_k) - (s_t + x_t p  q_t)\\
&=
\sum_{i=t}^{k-1}(y_i+\varepsilon x_i) + x_k l q_k - x_t p  q_t
\ge
\sum_{i=t}^{k-1}(y_i+\varepsilon x_i) - x_t(1-\varepsilon)\\
&\ge (y_t+\varepsilon x_t)(k-t)-x_t(1-\varepsilon)\ge x_t(k-t)\varepsilon,
\end{align*}
where the last inequality holds provided that $k-t\ge s_0$ with $s_0$ sufficiently large.
Therefore, c.f. \eqref{eq:defr*},
\[
r_{k,t}^*:=\sup_{
	\substack{		
		0\le l\le L_k,
		0\le p\le L_t\\
		|n|\le N_k,
		|m|\le N_t
	}
}
|r_{x_k,x_t}(s_{k,l},\tau_{k,n},s_{t,p},\tau_{t,m})|
\le
r^*((k-t)\varepsilon)\le
\mathcal K (k-t)^{-\lambda}
\le
\min(1,\lambda)/4.
\]
Moreover, from \eqref{eq:asymptotic} it follows that, for any $0\le k-t\le s_0$,
there exists a constant $\zeta\in(0,1)$ depending only on $\varepsilon$ such that for sufficiently large $S$,
\[
\sup_{
	\substack{		
		0\le l\le L_k,
		0\le p\le L_t\\
		|n|\le N_k,
		|m|\le N_t
	}
}
|r_{x_k,x_t}(s_{k,l},\tau_{k,n},s_{t,p},\tau_{t,m})|\le\zeta<1.
\]
Finally, recall that $N_k\le L_k \le \theta^{-1} v_k^{\frac{1}{H}}$ and $\exp(-v_k^2/2)= (t_k\log^{(1+C_H-p)} t_k )^{-1}$, c.f. \eqref{eq:defp},\eqref{eq:defv}, so that
\begin{align*}
P_2 &\le \frac{4}{\sqrt{1-\zeta^2}}
\sum_{0\le t<k\le T(S,\varepsilon)}
L_kL_tN_kN_t r_{k,t}^*
\exp\left(-\frac{v_k^2+v_t^2}{2(1+r_{k,t}^*)}\right)\\
&\le
K\left(
\sum_{\substack{0<k-t\le s_0 \\ 0\le t<k\le T(S,\varepsilon)}} +
\sum_{\substack{k-t > s_0 \\ 0\le t<k\le T(S,\varepsilon)}}
\right) (\cdot)\\
&\le
K\Bigg(
\sum_{k=0}^\infty v_k^{\frac{4}{H}}
\exp\left(-\frac{ v_k^2}{1+\zeta}\right)
+
\sum_{\substack{k-t > s_0 \\ 0\le t<k\le T(S,\varepsilon)}}
v_k^{\frac{2}{H}}
v_t^{\frac{2}{H}}
(k-t)^{-\lambda}
\exp\left(-\frac{v_k^2+v_t^2}{2(1+\frac{\lambda}{4})}\right)
\Bigg)\\
&\le
K\left(
\sum_{k=0}^\infty
t_k^{-\frac{2}{1+ \sqrt{\zeta}}}
+
\sum_{\substack{k-t > s_0 \\ 0\le t<k\le T(S,\varepsilon)}}
t_k^{-\frac{1}{1+\frac{\lambda}{2}}}
 t_t^{-\frac{1}{1+\frac{\lambda}{2}}}
(k-t)^{-\lambda}
\right)\\
&\le
K\left(
\sum_{k=[S]}^\infty
k^{-\frac{2}{1+ \sqrt{\zeta}}}
+
\sum_{\substack{[S]\le t<k\le \infty}}
k^{-\frac{1}{1+\frac{\lambda}{2}}}
t^{-\frac{1}{1+\frac{\lambda}{2}}}
(k-t)^{-\lambda}
\right)\\
&\le
K S^{-\rho},
\end{align*}
where the last inequality follows from basic algebra.
\end{proof}
Let $S>0$ be any fixed number, $a_0 = S$, $y_0 = f_p(a_0)$ and $b_0=a_0+y_0$. For $i>0$, define
\begin{equation}
\label{eq:def_ai2}
a_i = b_{i-1},\quad y_i = f_p(a_i), \quad b_i = a_i + y_i,\quad M_i = (a_i, b_i],
\quad v_i = Ay_i^{1-H},\quad \tilde M_i = \frac{M_i}{y_i} = (\tilde a_i, \tilde b_i].
\end{equation}
From this construction it is easy to see that the intervals $M_i$ are disjoint, $\cup_{j=0}^{i} M_j=(S,b_{i}]$ and $|\tilde M_i| = 1$.
Now let us introduce a discretization of the set $\tilde M_i\times J(v_i)$ as in \autoref{sec:disc}. That is, for some $\theta>0$, define grid points
\begin{align}
\label{eq:disc2}
s_{i,l} &= \tilde a_i+ l q_i,\quad 0\le l\le L_i,\quad  L_i = [1/ q_i],
\quad q_i  = \theta v_i^{-\frac{1}{H}},\\
\nonumber
\tau_{i,n} &= \tau_0+ nq_i, \quad 0\le |n| \le N_i,\quad N_i = [\tau^*(v_i)/q_i].
\end{align}
With the above notation, we have the following lemma.
\begin{lem}
\label{lem:lbound}
For any $\varepsilon\in(0,1)$ there \K{exist} positive constants $K$ and $\rho$ depending only on $\varepsilon, H, p$ and $\lambda$ such that, with $\theta_i= v_i^{-4/H}$,
\begin{align*}
\mathbb P &\left(
\bigcap_{i=0}^{[(T-S)/f_p(S)]}\left\{
\max_{\substack{0\le l\le L_i\\0\le |n| \le N_i}} A Z_{y_i}(s_{i,l},\tau_{i,n}) \le v_i - \frac{\theta_i^{\frac{H}{2}}}{v_i}
	\right\}
	\right)
	\\
&\ge
\frac{1}{4}\exp\left(
-(1+\varepsilon)
\int_{S}^{T}
\frac{1}{f_p(u)}\prob{\sup_{t\in [0,f_p(u)]} Q_{B_H}(t) >  f_p(u)}
	\D u
\right) - K S^{-\rho},
\end{align*}
for any $T-f_p(S)\ge S\ge K$, with $f_p(T)/f_p(S)\le \mathcal C$ and $\mathcal C$ being some universal positive constant.
\end{lem}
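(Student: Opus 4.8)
The plan is to mirror the proof of \autoref{lem:bound}, but to run Berman's inequality in the opposite direction. List the Gaussian variables $AZ_{y_i}(s_{i,l},\tau_{i,n})$, $0\le i\le I^*:=[(T-S)/f_p(S)]$, over all admissible $(l,n)$, and compare their joint distribution with that of the Gaussian vector obtained by keeping the correlations \emph{within} each block $\tilde M_i\times J(v_i)$ intact and declaring blocks with distinct indices independent. Writing $p_i:=\prob{\max_{l,n}AZ(s_{i,l},\tau_{i,n})>v_i-\theta_i^{H/2}/v_i}$ (recall $Z_{y_i}\stackrel{d}{=}Z$), Berman's inequality gives
\[
\prob{\bigcap_{i=0}^{I^*}\Big\{\max_{l,n}AZ_{y_i}(s_{i,l},\tau_{i,n})\le v_i-\tfrac{\theta_i^{H/2}}{v_i}\Big\}}\ge\prod_{i=0}^{I^*}(1-p_i)-P_2',
\]
where $P_2'$ is a double sum over pairs of grid points in distinct blocks of exactly the same shape as the quantity $P_2$ in the proof of \autoref{lem:bound}. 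The intervals $M_i$ are pairwise disjoint, adjacent blocks are handled by \eqref{eq:asymptotic} and far apart blocks by the polynomial decay in \eqref{eq:defr*}, and the finer discretization ($\theta_i=v_i^{-4/H}$, so the product $L_iN_i$ is larger only by a polylogarithmic factor) does not affect that estimate; hence $P_2'\le KS^{-\rho}$ essentially verbatim.

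It remains to bound $\prod_i(1-p_i)$ from below. Since the grid points lie in $\tilde M_i\times J(v_i)$, we have $p_i\le\prob{\sup_{(s,\tau)\in\tilde M_i\times J(v_i)}AZ(s,\tau)>v_i-\theta_i^{H/2}/v_i}$. The level shift is negligible, $\theta_i^{H/2}/v_i=v_i^{-3}$, so by \autoref{lem:asymptotics} (which also lets us replace $J(v_i)$ by $\tau\ge0$ up to a factor $1+o(1)$), the stationarity of $Z$ in $s$, the identity $|\tilde M_i|=1$, and \eqref{eq:basic_transformation} combined with \eqref{eq:actualAsympt}, one obtains, uniformly in $i\le I^*$ for $S$ large,
\[
p_i\le(1+o(1))\,\prob{\sup_{t\in[0,f_p(a_i)]}Q_{B_H}(t)>f_p(a_i)}=(1+o(1))\,g(a_i)\,|M_i|,
\]
where $g(u):=\frac{1}{f_p(u)}\prob{\sup_{t\in[0,f_p(u)]}Q_{B_H}(t)>f_p(u)}$ and $|M_i|=y_i=f_p(a_i)$. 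As every $p_i$ is $o(1)$ (in fact $O(a_i^{-1})$ up to polylogarithmic factors) once $S\ge K$, we get $-\log(1-p_i)\le(1+o(1))p_i$, hence $\prod_i(1-p_i)\ge\exp(-(1+o(1))\sum_ip_i)$. By \eqref{eq:Gf}, $g$ is eventually decreasing and slowly varying across each $M_i$ (the ratio of its values at the two endpoints of $M_i$ tends to $1$), so $g(a_i)|M_i|\le(1+o(1))\int_{M_i}g$; summing over $i$ and invoking $f_p(T)/f_p(S)\le\mathcal C$ to keep the tail of $g$ and the (at most one) overshooting interval under control yields $\sum_ip_i\le(1+o(1))\int_S^Tg(u)\,\D u$. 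Absorbing all the multiplicative slack into a single factor, which exceeds $\tfrac14$ for $S$ large, produces the asserted bound $\tfrac14\exp(-(1+\varepsilon)\int_S^Tg)-KS^{-\rho}$.

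The essential difficulty is not any individual estimate but the \emph{uniformity} of all the $1+o(1)$ terms over the whole family of blocks, whose cardinality $I^*$ and right endpoint $b_{I^*}$ grow with $T$; this is exactly what the hypotheses $T-f_p(S)\ge S\ge K$ and $f_p(T)/f_p(S)\le\mathcal C$ are for, since they confine the $v_i$, the interval lengths, and the variation of $g$ to a range where the asymptotics of \autoref{lem:asymptotics}, the Riemann-sum comparison, and the bound $-\log(1-p_i)\le(1+o(1))p_i$ are simultaneously valid. A minor technical point is that \autoref{lem:disc_asymp} is stated for a fixed discretization parameter $\theta$, whereas here $\theta_i=v_i^{-4/H}\to0$; this is circumvented by using only the crude one-sided inequality $\max_{\mathrm{grid}}AZ\le\sup_{\mathrm{cont}}AZ$, which is all the desired lower bound requires.
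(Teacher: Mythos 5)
Your overall plan — mirror \autoref{lem:bound}, apply Berman's inequality in the opposite direction by comparing with the block-independent vector, bound $p_i$ via \autoref{lem:asymptotics}, and pass to the integral via a Riemann-sum comparison with the multiplicative slack absorbed into $\tfrac14$ and $(1+\varepsilon)$ — is exactly the structure of the paper's proof, and that part is sound.

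There is, however, a genuine gap in the treatment of $P_2'$. You assert that $P_2'$ has ``exactly the same shape'' as $P_2$ in \autoref{lem:bound} and can be bounded ``essentially verbatim,'' with ``adjacent blocks handled by \eqref{eq:asymptotic}.'' This elides a real difference between the two lemmas. In \autoref{lem:bound} the intervals $I_k$ are deliberately separated by gaps $\varepsilon x_k$, so every cross-block pair of grid points has rescaled separation at least $\varepsilon$, whence $|r|\le\zeta<1$ and the Berman summand is uniformly controlled. In \autoref{lem:lbound} the blocks $M_i$ abut: for $j=i+1$ and grid points near the common endpoint $b_i=a_{i+1}$, the rescaled separation is arbitrarily small, and since $y_{i+1}/y_i\to 1$ the two $B_H$-increments are taken over nearly identical intervals — their correlation $r$ tends to $1$. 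Then the factor $(1-\rho_{i,j}^2)^{-1/2}$ in Berman's bound blows up, and the exponential factor only decays like $\exp(-v^2/2)$; a blanket $|r|$-bound as in $P_2$ simply does not converge for these pairs. The paper's proof sidesteps this by exploiting the one-sidedness of Berman's inequality: in the lower-bound direction the summand carries $(\Lambda^1_{i,j}-\Lambda^0_{i,j})^+=(-r)^+=\tilde r^+$, and \eqref{eq:asymptotic} gives $r\ge\delta>0$ for precisely the dangerous nearby pairs in $j=i+1$, so $\tilde r^+=0$ there and those terms vanish identically (see \eqref{eq:est2}). Only the far pairs in $j=i+1$ (controlled by $r^*(s_0)<1$, \eqref{eq:est3}) and the pairs with $j\ge i+2$ (controlled by the polynomial decay $r^*(j-i-1)$, \eqref{eq:est1}) contribute. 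Your write-up never invokes the sign of the correlation or the $(\cdot)^+$ structure, and the most natural reading of ``adjacent blocks handled by \eqref{eq:asymptotic}'' — namely, using the upper bound $r\le 1-\delta$ — is exactly what fails here. Without the sign argument, the $P_2'\le KS^{-\rho}$ estimate is not justified.

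A minor remark: $-\log(1-p_i)\le (1+O(p_i))p_i$, not $(1+o(1))p_i$ uniformly in $i$; what one actually needs is $\sum_i p_i^2$ small (which holds for $S$ large, since $p_i=O(a_i^{-1}\operatorname{polylog})$), and this is what the $\tfrac14$ prefactor is designed to absorb.
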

\begin{proof}
Put $\hat v_i =  v_i - \theta_i^{\frac{H}{2}} / v_i$ and $I = [(T-S)/f_p(S)]$. Similarly as in the proof of \autoref{lem:bound} we find that Berman's inequality implies
\begin{align*}
\mathbb P &\left(
\bigcap_{i=0}^{I}\left\{
\max_{\substack{0\le l\le L_i\\0\le |n| \le N_i}} A Z_{y_i}(s_{i,l},\tau_{i,n}) \le v_i - \frac{\theta_i^{\frac{H}{2}}}{v_i}
	\right\}
	\right)\\
&\ge
\prod_{i=0}^{I} \prob{\max_{\substack{0\le l\le L_i\\0\le |n| \le N_i}} A Z_{y_i}(s_{i,l},\tau_{i,n}) \le
v_i - \frac{\theta_i^{\frac{H}{2}}}{v_i}}
 -
\sum_{0\le i<j\le I} D_{i,j}=:P_1'+P_2',
\end{align*}
where
\[
D_{i,j} = \frac{1}{2\pi}\sum_{\substack{0\le l\le L_i\\0\le p\le L_j}}\sum_{\substack{|n|\le N_i\\|m|\le N_j}}
\frac{(\tilde r_{y_i,y_j}(s_{i,l},\tau_{i,n},s_{j,p},\tau_{j,m}))^+}{\sqrt{1-\tilde r^2_{y_i,y_j}(s_{i,l},\tau_{i,n},s_{j,p},\tau_{j,m})}}\
\exp\left(-\frac{\half(\hat v_i^2+\hat v_j^2)}{1+|\tilde r_{y_i,y_j}(s_{i,l},\tau_{i,n},s_{j,p},\tau_{j,m})|}\right),
\]
with
\[
\tilde r_{y_i,y_j}(s_{i,l},\tau_{i,n},s_{j,p},\tau_{j,m})
= - r_{y_i,y_j}(s_{i,l},\tau_{i,n},s_{j,p},\tau_{j,m}).
\]

\noindent\textit{Estimate of $P_1'$.}

\vb

By \autoref{lem:asymptotics} the correction term $\theta_i^{\frac{H}{2}}/v_i$ does
not change the order of \K{the} asymptotics of the tail of $Z$. Furthermore, the tail asymptotics of the supremum on the strip
$(s,\tau)\in\tilde M_i\times J(v_i)$ are of the same order if $\tau\ge 0$.
Hence, for every $\varepsilon>0$, \k2{following
the same lines of reasoning as in the estimation of $P_1$ in \autoref{lem:bound},}
\begin{align*}
P_1' &\ge
\prod_{i=0}^{I} \left(1-\prob{\max_{\substack{0\le l\le L_i\\0\le |n| \le N_i}} A Z_{y_i}(s_{i,l},\tau_{i,n}) > \hat v_i} \right)
\ge
\frac{1}{4}
 \exp\left(
-\sum_{i=0}^{I} \prob{\max_{\substack{0\le l\le L_i\\0\le |n| \le N_i}} A Z_{y_i}(s_{i,l},\tau_{i,n}) > \hat v_i}
\right)\\
&\ge
\frac{1}{4}
 \exp\left(
-\sum_{i=0}^{I} \prob{\sup_{\substack{s\in\tilde M_i\\ \tau\in J(v_i)}} A Z(s,\tau) > v_i - \frac{\theta_i^{\frac{H}{2}}}{v_i}}
\right)
\\
&\ge
\frac{1}{4}
 \exp\left(
-(1+\varepsilon)\sum_{i=0}^{I} \prob{\sup_{\substack{s\in\tilde M_i\\ \tau\ge 0}} A Z(s,\tau) > v_i}
\right)
\\
&=
\frac{1}{4}
 \exp\left(
-(1+\varepsilon)\sum_{i=0}^{I} \prob{\sup_{t\in[0,f_p(a_i)]} Q_{B_H}(t) > f_p(a_i)}
\right)\\
&\ge
\frac{1}{4}\exp\left(
-(1+\varepsilon)
\int_{S}^{T}
\frac{1}{f_p(u)}\prob{\sup_{t\in[0,f_p(u)]} Q_{B_H}(t) > f_p(u)}\D u
\right),
\end{align*}
provided that $S$ is sufficiently large.
\vb

\noindent\textit{Estimate of $P_2'$.}

\vb

Clearly, for $j\ge i+2$ and any $0\le l\le L_i$, $0\le p\le L_j$; c.f. \eqref{eq:def_ai2},
\[
y_j s_{j,p} - y_i s_{i,l} = a_j + y_j p q_j - \left(a_i + y_i l  q_i\right) \ge (j-i - 1) y_i,
\]
so that by \eqref{eq:defr*}, for any $0\le i<j\le I$,
\begin{equation}
\label{eq:est1}
r^*_{i,j}:=\sup_{
	\substack{		
		0\le l\le L_i,
		0\le p\le L_j\\
		|n|\le N_i,
		|m|\le N_j
	}
} |\tilde r_{y_i,y_j}(s_{i,l},\tau_{i,n},s_{j,p},\tau_{j,m})|
\le
 r^*(j-i-1)\le r^*(1)<1.
\end{equation}
On the other hand, by
\eqref{eq:asymptotic}, there exist positive constants $s_0$, such that for sufficiently large $S$,
\begin{align}
\label{eq:est2}
(\tilde r_{y_i,y_j}(s_{i,l},\tau_{i,n},s_{j,p},\tau_{j,m}))^+ = 0, &\quad \text{if}\quad
j=i+1, \quad |y_j s_{j,p} - y_i s_{i,l}|/y_i  \le s_0,\\
\label{eq:est3}
|\tilde r_{y_i,y_j}(s_{i,l},\tau_{i,n},s_{j,p},\tau_{j,m})| \le
r^*(s_0)<1,
&\quad\text{if}\quad
j=i+1,\quad |y_j s_{j,p} - y_i s_{i,l}|/y_j > s_0
\end{align}
Therefore, by \eqref{eq:est1}--\eqref{eq:est3} we obtain
\begin{align*}
P_2' &\le
\sum_{\substack{0\le i\le I-1 \\ j = i+1}}\sum_{\substack{0\le l\le L_i\\0\le p\le L_j}}\sum_{\substack{|n|\le N_i\\|m|\le N_j}}
\frac{1}{\sqrt{1- r^*(s_0)}}
\exp\left(-\frac{\half(\hat v_i^2+\hat v_j^2)}{1+r^*(s_0)}\right)\\
&\quad +
\sum_{\substack{0\le i\le I-2 \\ i+2 \le j \le I }}\sum_{\substack{0\le l\le L_i\\0\le p\le L_j}}\sum_{\substack{|n|\le N_i\\|m|\le N_j}}
\frac{r^*(j-i-1)}{\sqrt{1- r^*(1)}}
\exp\left(-\frac{\half(\hat v_i^2+\hat v_j^2)}{1+r^*(j-i-1)}\right).
\end{align*}
Completely \k2{similarly} to the estimation of $P_2$ in the proof of \autoref{lem:bound}, we can
\k2{get} that there exist positive constants $K$ and $\rho$ such that, for sufficiently large $S$,
\[
P_2'\le K S^{-\rho}.
\]
\end{proof}

The next lemma is a straightforward modification of \citep[Lemma 3.1 and Lemma 4.1]{Watanabe70}, see also \citep[Lemma 1.4]{Qualls71}.
\begin{lem}
\label{lem:v2}
If \autoref{thm:equiv} is true under the additional condition, that for large $t$,
\begin{equation}
\label{eq:frestricted}
\left(\frac{2}{A^2} \log t \right)^{1/2(1-H)} \le f(t) \le  \left(\frac{3}{A^2} \log t \right)^{1/2(1-H)},
\end{equation}
it is true without the additional condition.
\end{lem}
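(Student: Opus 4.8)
The plan is to show that the only functions $f$ for which the dichotomy is non-trivial are those squeezed between the two powers of $\log t$ appearing in \eqref{eq:frestricted}, so that checking the theorem on that band suffices. First I would dispose of functions that are ``too large''. If $f$ is positive, nondecreasing and eventually exceeds $(3A^{-2}\log t)^{1/(2(1-H))}$, then by \eqref{eq:actualAsympt} (or directly by \citet{Husler99} for the one-dimensional tail together with \eqref{eq:basic_transformation}) the integrand in $\mathscr I_f$ is bounded, up to polynomial-in-$\log u$ factors, by $f(u)^{-1}\,(v_f(u))^{2/H-1}\Psi(v_f(u))$ with $v_f(u)=Af(u)^{1-H}\ge (3\log u)^{1/2}$, hence by something summable like $u^{-3/2}\mathrm{polylog}(u)$; so $\mathscr I_f<\infty$ and the left side of Theorem \ref{thm:equiv} is $0$. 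On the other hand, for such a large $f$ we also have $\{Q_{B_H}(t)>f(t)\ \text{i.o.}\}\subset\{Q_{B_H}(t)>(3A^{-2}\log t)^{1/(2(1-H))}\ \text{i.o.}\}$ for large $t$, and the latter event has probability $0$ by Corollary \ref{cor:main} (whose upper bound, $\limsup Q_{B_H}(t)/(\log t)^{1/(2(1-H))}=(2/A^2)^{1/(2(1-H))}<(3/A^2)^{1/(2(1-H))}$ a.s., is all that is needed, and that half of the corollary follows from the convergence part of Theorem \ref{thm:equiv} applied to the already-covered function $f_{-1}$, or more simply from Borel--Cantelli along $t=e^n$ plus a standard continuity estimate). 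So the conclusion $\prob{Q_{B_H}(t)>f(t)\ \text{i.o.}}=0=\mathbf 1\{\mathscr I_f=\infty\}$ holds for all such $f$ without invoking the restricted theorem.

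Symmetrically I would dispose of functions that are ``too small'': if $f$ is positive, nondecreasing and eventually below $(2A^{-2}\log t)^{1/(2(1-H))}$, I want to conclude $\prob{Q_{B_H}(t)>f(t)\ \text{i.o.}}=1=\mathbf 1\{\mathscr I_f=\infty\}$. That $\mathscr I_f=\infty$ is immediate from \eqref{eq:actualAsympt}: with $v_f(u)=Af(u)^{1-H}\le (2\log u)^{1/2}$ the integrand is at least $c\,f(u)^{-1}(v_f(u))^{2/H-1}\Psi(v_f(u))\gtrsim u^{-1}\mathrm{polylog}(u)^{-1}$, which is not integrable (here one uses that $f$, being nondecreasing and dominated by a power of $\log$, satisfies $f(u)^{-1}\gtrsim (\log u)^{-1/(2(1-H))}$). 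That the event has probability $1$ follows because $\{Q_{B_H}(t)>f(t)\ \text{i.o.}\}\supset\{Q_{B_H}(t)>(2A^{-2}\log t)^{1/(2(1-H))}\ \text{i.o.}\}$ eventually, and this event has probability $1$ --- which, up to a harmless constant in front of $\log t$, is exactly a statement covered by the restricted Theorem \ref{thm:equiv} applied to (a minor modification of) $f_0$, since that $f$ does satisfy \eqref{eq:frestricted}.

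Having trapped the interesting case, the remaining observation is purely bookkeeping: if $f$ satisfies neither ``too large'' nor ``too small'' then, being monotone, $f(t)$ must lie in $[(2A^{-2}\log t)^{1/(2(1-H))},(3A^{-2}\log t)^{1/(2(1-H))}]$ for all sufficiently large $t$ --- more precisely, one replaces $f$ by $\tilde f=f$ on the set where it already lies in the band and clips it to the nearer endpoint elsewhere; $\tilde f$ is still positive and nondecreasing, satisfies \eqref{eq:frestricted}, differs from $f$ only on a set where both $\{Q_{B_H}(t)>f(t)\ \text{i.o.}\}$ and finiteness of the tail integral are unaffected (on the clipped-up part the argument of the first paragraph gives probability $0$ and a convergent integral for \emph{that} piece; on the clipped-down part the second paragraph gives probability $1$ and a divergent integral), and $\mathscr I_f$ and $\mathscr I_{\tilde f}$ converge or diverge together. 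Then the restricted Theorem \ref{thm:equiv} applied to $\tilde f$ yields the result for $f$. The main obstacle is entirely in the first two paragraphs, namely verifying cleanly that for $f$ outside the band the $0$-$1$ value of $\prob{Q_{B_H}(t)>f(t)\ \text{i.o.}}$ is forced by the crude a.s.\ bound of Corollary \ref{cor:main} and \emph{simultaneously} agrees with the convergence/divergence of $\mathscr I_f$ read off from \eqref{eq:actualAsympt}; once that compatibility is checked, the monotone-clipping step is routine.
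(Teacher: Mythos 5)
Your overall strategy (dispose of ``too large'' and ``too small'' $f$ and then clip into the band) is the standard Watanabe--Qualls shape, and the ``too large'' side is fine: $\mathscr I_{g_3}<\infty$ always, so both the integral and the i.o.\ probability are $0$ there. But the ``too small'' side has a genuine gap. The lower band boundary $g_2(t)=(2A^{-2}\log t)^{1/(2(1-H))}$ is \emph{not} on the divergent side of the criterion for all $H$. From \eqref{eq:actualAsympt}, the integrand in $\mathscr I_{g_2}$ is $\asymp u^{-1}(\log u)^{C_H}$ with $C_H=(2(1-H)^2-H)/(2H(1-H))$, and $C_H<-1$ precisely when $H>2/3$; hence for $H>2/3$ one has $\mathscr I_{g_2}<\infty$, and therefore, by the unconditional ``easy'' (Borel--Cantelli) direction, $\prob{Q_{B_H}(t)>g_2(t)\ \text{i.o.}}=0$, not $1$. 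Equivalently, $1+C_H<0$ for $H>2/3$, so every $f_p$ with $p\ge 0$ --- including the critical $f_0$ --- lies strictly \emph{below} $g_2$ and hence outside \eqref{eq:frestricted}. This breaks both halves of your second paragraph: the bound ``integrand $\gtrsim u^{-1}\mathrm{polylog}(u)^{-1}$'' does not preclude integrability (e.g.\ $u^{-1}(\log u)^{-2}$ is integrable), and the inclusion $\{Q_{B_H}>f\ \text{i.o.}\}\supset\{Q_{B_H}>g_2\ \text{i.o.}\}$ yields nothing when the right-hand event is null. The clipping step in your last paragraph then has no source of ``probability $1$'' to transfer on the clipped-down pieces when $H>2/3$, so the reduction is incomplete exactly where it is needed.

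A secondary issue: a nondecreasing $f$ that is neither eventually above $g_3$ nor eventually below $g_2$ need not eventually lie inside the band --- both band boundaries are themselves increasing, and $f$ can cross each of them infinitely often --- so the clipping is not mere bookkeeping but carries the whole weight of the argument, and the two clipped regions $E_2=\{f<g_2\}$ and $E_3=\{f>g_3\}$ must be handled asymmetrically: on $E_3$ one can discard the contribution because $\mathscr I_{g_3,E_3}<\infty$ and $\prob{Q_{B_H}>g_3\ \text{i.o.}}=0$ always, but on $E_2$ one cannot rely on $g_2$ being supercritical. Any correct reduction must treat $E_2$ differently (for instance by first showing that the divergence of $\mathscr I_f$ may without loss be assumed to come from the region where $f\ge g_2$), and your proposal does not do this.
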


\section{Proof of the main results}
\label{sec:Proofs}
\begin{proof}[\bf{Proof of \autoref{thm:equiv}}]
\noindent Note that the case $\mathscr I_f <\infty$ is straightforward and does not need any
additional knowledge on the process $Q_{B_H}$ apart from the stationarity property.
Indeed, consider the sequence of intervals
$M_i$ as in \autoref{lem:lbound}. Then, for any $\varepsilon>0$ and sufficiently large $T$,
\[
\sum_{k = [T]+1}^\infty
\prob{\sup_{t\in M_{k}} Q_{B_H}(t) > f(a_{k})}
=
\sum_{k= [T]}^\infty \prob{\sup_{t\in[0, f(b_k)]} Q_{B_H}(t) > f(b_k)}
\le
\mathscr I_f <\infty,
\]
and the Borel-Cantelli lemma completes this part of the proof since $f$ is an increasing function.

Now let $f$ be an increasing \K{function} such that $\mathscr I_f\equiv\infty$.
\K{Using} the same notation as in
\autoref{lem:bound} with $f$ instead of $f_p$, we find that, for any $S,\varepsilon,\theta>0$,
\begin{align*}
\prob{Q_{B_H}(s) > f(s)\text{ i.o.}} &\ge \prob{\left\{\sup_{t\in I_k} Q_{B_H}(t) > f(t_k)\right\}\quad\text{i.o.}}\\
&\ge
\prob{ \left\{ \max_{\substack{ 0\le l\le L_k\\\quad 0\le |n| \le N_k}} AZ_{x_k}(s_{k,l},\tau_{k,n})> v_k\right\}\text{ i.o.}}.
\end{align*}
Let
\[
E_k = \left\{ \max_{\substack{ 0\le l\le L_k\\\quad 0\le |n| \le N_k}} AZ_{x_k}(s_{k,l},\tau_{k,n})\le v_k\right\}.
\]
For sufficiently large $S$ and $\theta$; c.f. estimation of $P_1$, we get
\begin{equation}
\label{eq:sum}
\sum_{k=0}^\infty
\prob{ E_k^c}
\ge
\frac{(1-\varepsilon)}{(1+\varepsilon)}
\int_{S+f(S)}^{\infty}
\frac{1}{f(u)}\prob{\sup_{t\in [0,f(u)]} Q_{B_H}(t) >  f(u)}\D u
=\infty.
\end{equation}
Note that
\[
1-\prob{E_i^c\quad \text{i.o.}}
=
\lim_{m\toi}\prod_{k=m}^\infty\prob{E_k} +
\lim_{m\toi}\left(\prob{\bigcap_{k=m}^\infty E_k} - \prod_{k=m}^\infty\prob{E_k}\right).
\]
The first limit \K{equals to} zero as a consequence of \eqref{eq:sum}.
\K{The second limit equals to} zero because of the asymptotic independence of the events $E_k$.
Indeed, there exist positive constants $K$ and $\rho$, depending only on $H,\varepsilon,\lambda$, such that for any $n>m$,
\[
A_{m,n}=\left|\prob{\bigcap_{k=m}^n E_k} - \prod_{k=m}^n\prob{E_k}\right|\le K (S+m)^{-\rho},
\]
by the same calculations as in the estimate of $P_2$ in \autoref{lem:bound} after realizing that, by \autoref{lem:v2},
we might restrict ourselves to the case when
\eqref{eq:frestricted} holds. Therefore $\prob{E_i^c \text{ i.o.}} =1$,
which \K{completes} the proof.
\end{proof}

\noindent \textbf{Proof of \autoref{thm:main}}
\K{In order to make the proof more transparent we divide it on several steps.}
\vb

\noindent\textit{Step 1.}
Let $p>1$. \K{Then,} for every $\varepsilon\in(0,\frac{1}{4})$,
\[
\liminf_{t\toi}\frac{\xi_p(t) -t}{h_p(t)}\ge -(1+2\varepsilon)^2\quad\text{a.s.}
\]
\begin{proof}
Let $\{T_k:k\ge 1\}$ be a sequence such that $T_k\toi$, as $k\toi$.
Put $S_k =T_k -(1+2\varepsilon)^2 h_p(T_k)$. Since $h_p(t)=O(t\log^{1-p} t \log_2 t)$, then, for $p>1$,
$S_k\sim T_k$, as $k\toi$, and from \autoref{lem:bound} it follows that
\begin{align*}
\mathbb P\Big (&\frac{\xi_p(T_k) - T_k}{h_p(T_k)}\le -(1+2\varepsilon)^2\Big )
=
\prob{\xi_p(T_k)\le S_k}
=
\prob{\sup_{S_k<t\le T_k}\frac{Q_{B_H}(t)}{f_p(t)}< 1}\\
&\le
\exp\left(
-\frac{(1-\varepsilon)}{(1+\varepsilon)}
\int_{S_k+f_p(S_k)}^{T_k}
\frac{1}{f_p(u)}
\prob{\sup_{t\in [0,f_p(u)]} Q_{B_H}(t) >  f_p(u)}\D u
\right)
+ 2K T_k^{-\rho}.
\end{align*}
Moreover, as $k\toi$,
\begin{align}
\label{eq:logasympt}
\nonumber
\int_{S_k+f_p(S_k)}^{T_k}&
\frac{1}{f_p(u)}
\prob{\sup_{t\in [0,f_p(u)]} Q_{B_H}(t) >  f_p(u)}\D u\\
&\sim
(1+2\varepsilon)^2 h_p(T_k)
\frac{1}{f_p(T_k)}
\prob{
	\sup_{t\in [0,f_p(T_k)]}
		Q_{B_H}(t) >  f_p(T_k)}
=
(1+2\varepsilon)^2 p \log_2 T_k.
\end{align}
Now take $T_k = \exp(k^{1/p})$. Then,
\[
\sum_{k=0}^\infty
\prob{\xi_p(T_k)\le S_k}
\le
2K\sum_{k=0}^\infty k^{-(1+\varepsilon/2)}<\infty.
\]
Hence, by the Borel-Cantelli lemma, \K{we have}
\begin{equation}
\label{eq:step}
\liminf_{k\toi}\frac{\xi_p(T_k)-T_k}{h_p(T_k)}\ge -(1+2\varepsilon)^2\quad\text{a.s.}.
\end{equation}
Since $\xi_p(t)$ is a non-decreasing random function of $t$, for every $T_k\le t\le T_{k+1}$, we have
\begin{align*}
\frac{\xi_p(t)-t}{h_p(t)}\ge
\frac{\xi_p(T_k)-T_k}{h_p(T_k)}- \frac{T_{k+1}-T_k}{h_p(T_k)}.
\end{align*}
For $p >1$ elementary calculus implies
\[
\lim_{k\toi}\frac{T_{k+1}- T_k}{ h_p (T_k)}  =0,
\]
so that
\[
\liminf_{t\toi}\frac{\xi_p(t)-t}{h_p(t)}\ge\liminf_{k\toi}\frac{\xi_p(T_k)-T_k}{h_p(T_k)}\quad\text{a.s.},
\]
which \K{completes} the proof of this step.
\end{proof}

\noindent\textit{Step 2.}
Let $p>1$. Then, for every $\varepsilon\in(0,1)$,
\[
\liminf_{t\toi}\frac{\xi_p(t)-t}{h_p(t)}\le -(1-\varepsilon)\quad\text{a.s.}
\]
\begin{proof}
As in the proof of the lower bound \K{(Step 1), we} put
\[
T_k =\exp(k^{(1+\varepsilon^2)/p}),\quad S_k =T_k -(1-\varepsilon) h_p(T_k),\quad k\ge 1.
\]
Let
\[
B_k = \{\xi_p(T_k)\le S_k\}=\left\{\sup_{S_k<t\le T_k}\frac{Q_{B_H}(t)}{f_p(t)}<1\right\}.
\]
It suffices to show $\prob{B_n \text{ i.o.}}=1$, that is
\begin{equation}
\label{eq:ub_goal}
\lim_{m\toi}\prob{\bigcup_{k=m}^\infty B_k} =1.
\end{equation}
Let
\[
a_0^k = S_k, \quad y_0^k = f_p(a_0^k),\quad b_0^k = a_0^k + y_0^k,
\]
\[
a_i^k = b_{i-1}^k,\quad y_i^k = f_p(a_i^k),\quad  b_i^k = a_i^k +y_i^k,\quad M_i^k = (a_i^k, b_i^k],\quad
v_i^k = A (y_i^k)^{1-H},\quad
\tilde M_i^k = \frac{M_i^k}{ y_i^k}=(\tilde a_i^k, \tilde b_i^k].
\]
Define $J_k$ to be the biggest number such that $b_{J_k-1}^k\le T_k$ and $b_{J_k}^k> T_k$. In what follows let $b_{J_k}^k$ be redefined to $T_k$. Note that $J_k\le [(T_k-S_k)/f_p(S_k)]$.

Since $f_p$ is an increasing function,
\[
B_k = \bigcap_{i=0}^{J_k} \left\{\sup_{t\in M_i^k}\frac{Q_{B_H}(t)}{f_p(t)}<1\right\}
\supset \bigcap_{i=0}^{J_k} \left\{\sup_{t\in M_i^k}Q_{B_H}(t)< y_i^k\right\}
= \bigcap_{i=0}^{J_k} \left\{\sup_{\substack{s\in \tilde M_i^k \\ \tau\ge 0}} AZ_{y_i^k}(s,\tau)<
v_i^k\right\}.
\]
Analogously to \eqref{eq:disc2}, define a discretization of the set $\tilde M_i^k\times J(v_i^k)$ as follows
\begin{align*}
s_{i,l}^k &= \tilde a_i^k+ l q_i^k,\quad
0\le l\le L^k_i,\quad  L^k_i = [1/ q_i^k],
\quad q_i^k  = \theta_i^k (v_i^k)^{-\frac{1}{H}},
\quad \theta_i^k  = (v_i^k)^{-\frac{4}{H}}\\
\tau_{i,n}^k &= \tau_0 + n q_i^k,\quad 0\le |n| \le N^k_i,\quad N^k_i = [\tau^*(v_i^k)/q_i^k].
\end{align*}
Finally, let
\[
A_k=\bigcap_{i=0}^{J_k}\left\{\max_{
\substack{0\le l\le L^k_i\\0\le |n| \le N^k_i}} AZ_{y_i^k}(s_{i,l}^k,\tau_{i,n}^k)\le v_i^k-\frac{(\theta_i^k)^{\frac{H}{2}}}{v_i^k}\right\}.
\]
Observe that
\[
\prob{\bigcup_{k=m}^\infty A_k} \le \prob{\bigcup_{k=m}^\infty B_k}+\sum_{k=m}^\infty\prob{A_k\cap B_k^c}.
\]
Furthermore,
\begin{align}
\sum_{k=m}^\infty\prob{A_k\cap B_k^c} &\le \sum_{k=m}^\infty\sum_{i=0}^{J_k}
\prob{\max_{
\substack{0\le l\le L^k_i\\0\le |n| \le N^k_i}} AZ_{y_i^k}(s_{i,l}^k,\tau_{i,n}^k)\le v_i^k-\frac{(\theta_i^k)^{\frac{H}{2}}}{v_i^k}, \sup_{\substack{s\in \tilde M_i^k \\ \tau\ge 0}} AZ_{y_i^k}(s,\tau)\ge v_i^k}
\nonumber\\
&\le
\sum_{k=m}^\infty\sum_{i=0}^{J_k}
\prob{
	\max_{
\substack{0\le l\le L^k_i\\0\le |n| \le N^k_i}} AZ_{y_i^k}(s_{i,l}^k,\tau_{i,n}^k)\le v_i^k-\frac{(\theta_i^k)^{\frac{H}{2}}}{v_i^k},
    \sup_{
    	\substack{
    		s\in \tilde M_i^k \\
    		\tau\in J(v_i^k)}
    	} AZ_{y_i^k}(s,\tau)\ge v_i^k}
\nonumber\\
\label{eq:dummy}
&\quad +
\sum_{k=m}^\infty\sum_{i=0}^{J_k}
\prob{
	\sup_{
		\substack{
			s\in \tilde M_i^k \\
			\tau \notin J(v_i^k)
		}
	}
		AZ_{y_i^k}(s,\tau)\ge v_i^k
}.
\end{align}
By \autoref{lem:discreate_approx}, for sufficiently large $m$ and some $K_1, K_2>0$, the first sum is bounded from above by
\begin{align*}
K&\sum_{k=m}^\infty\sum_{i=0}^{J_k}
(v_i^k)^{\frac{2(1-2H)}{H}} \exp\left(-\frac{(v_i^k)^2}{2}-\frac{(v_i^k)^{4}}{K_1}\right)
\le
K\sum_{k=m}^\infty\sum_{i=0}^{J_k}
\frac{(\log a_i^k)^{\frac{1-2H}{H}}}{a_i^k(\log a_i^k)^{1+C_H-p}}
\exp\left(-\frac{\log^2( a_i^k)}{K_2}\right)\\
&\le
K\sum_{k=m}^\infty\sum_{i=0}^{J_k}
(S_k + i f_p(S_k))^{-2}
\le
K\sum_{k=m}^\infty
(S_k )^{-1}
\le K m^{-4}.
\end{align*}
Note that by \eqref{eq:tauBigger}, for sufficiently large $m$, the term in \eqref{eq:dummy} is bounded from above by
\begin{align*}
K&\sum_{k=m}^\infty\sum_{i=0}^{J_k} (v_i^k)^{\frac{2}{H}}
\exp\left(-\half (v_i^k)^2 - b \log^2 v_i^k\right)
\le
K\sum_{k=m}^\infty\sum_{i=0}^{J_k}
\frac{1}{a_i^k \log^{1+p} a_i^k}
\le
K\sum_{k=m}^\infty\sum_{i=S_k}^{\infty}
\frac{1}{i \log^{1+p} i}
\\
&\le
K
\sum_{k=m}^\infty (\log(S_k))^{-p}\le
K\sum_{k=m}^\infty k^{-(1+\varepsilon)^2}\le K m^{-\varepsilon}.
\end{align*}
Therefore
\[
\lim_{m\toi} \sum_{k=m}^\infty\prob{A_k\cap B_k^c} = 0
\]
and
\[
\lim_{m\toi}\prob{\bigcup_{k=m}^\infty B_k}\ge\lim_{m\toi} \prob{\bigcup_{k=m}^\infty A_k} .
\]
\K{In order to complete} the proof of \eqref{eq:ub_goal} we only need to show that
\begin{equation}
\label{eq:generalBC}
\prob{A_n \text{ i.o.}}=1.
\end{equation}
Similarly to \eqref{eq:logasympt}, we have
\[
\int_{S_k}^{T_k}
\frac{1}{f_p(u)}
\prob{\sup_{t\in [0,f_p(u)]} Q_{B_H}(t) >  f_p(u)}\D u
	\sim
(1-\varepsilon) p \log_2 T_k.
\]
Now from \autoref{lem:lbound} it follows that
\[
\prob{A_k} \ge
\frac{1}{4}\exp\left(
-(1-\varepsilon^2)
p \log_2 T_k
\right) - K S_k^{-\rho}
\ge
\frac{1}{8}k^{-(1-\varepsilon^4)},
\]
for every $k$ sufficiently large. Hence,
\begin{equation}
\label{eq:sumA}
\sum_{k=1}^\infty \prob{A_k} =\infty.
\end{equation}

Applying Berman's inequality, we get for $t<k$
\begin{equation}
\label{eq:AkAt}
\prob{A_kA_t}\le \prob{A_k}\prob{A_t} + Q_{k,t},
\end{equation}
where
\[
Q_{k,t} =  \sum_{\substack{0\le i\le J_k \\ 0\le j \le J_t}}\sum_{\substack{0\le l\le L_i^k\\0\le p\le L_j^t}}\sum_{\substack{|n|\le N_i^k\\|m|\le N_j^t}}
\frac{|r_{y_i^k,y_j^t}(s_{i,l}^k,\tau_{i,n}^k,s_{j,p}^t,\tau_{j,m}^t)|}{\sqrt{1- r^2_{y_i^k,y_j^t}(s_{i,l}^k,\tau_{i,n}^k,s_{j,p}^t,\tau_{j,m}^t)}}
\exp\left(
-\frac{( v_i^k-(v_i^k)^{-3})^2+( v_j^t-(v_j^t)^{-3})^2}{2(1+ |r_{y_i^k,y_j^t}(s_{i,l}^k,\tau_{i,n}^k,s_{j,p}^t,\tau_{j,m}^t)|)}
\right).
\]
For any $0\le i\le J_k$,
$0\le j\le J_t$, $0\le l\le L_i^k$, $0\le p\le L_j^t$, and $t<k$,
\[
y_i^k s_{i,l}^k-y_j^t s_{j,p}^t =
a_i^k + y_i^k l  q_i^k- \left(a_j^t+ y_j^t p q_j^t\right)
\ge
S_k - T_t \ge S_k - T_{k-1}
\ge
\half(T_k-T_{k-1}),
\]
where the last inequality holds for $k$ large enough, \K{since} 
\[
\frac{S_{k+1}-T_k}{T_{k+1}-T_k}\sim 1,\quad\as k.
\]
Thus, for sufficiently large $k$ and every $0\le t<k$, c.f. \eqref{eq:defr*},
\begin{align*}
\sup_{
	\substack{
		0\le i\le J_k\\	
		0\le j\le J_t\\	
		0\le l\le L_i^k,
		0\le p\le L_j^t\\
		|n|\le N_i^k,
		|m|\le N_j^t
	}
}
&
|r_{y_i^k,y_j^t}(s_{i,l}^k,\tau_n^k,s_{j,p}^t,\tau_{j,m}^t)|
\le
\sup_{
	\substack{
		0\le i\le J_k\\	
		0\le j\le J_t
	}
}r^*\left(\frac{T_k-T_{k-1}}{2\sqrt{y_i^ky_j^k}}\right)\\
&
\le
\mathcal K \left(\frac{T_k - T_{k-1}}{2 f_p(T_k)}\right)^{-\lambda}
\le
\mathcal K (T_k - T_{k-1})^{-\lambda/2}
\le\frac{\min(1,\lambda)}{32}.
\end{align*}
Therefore, for some generic constant $K$ not depending on $k$ and $t$ which may vary between lines, for every $t<k$ sufficiently large,
\begin{align*}
Q_{k,t} &\le K \sum_{\substack{0\le i\le J_k \\ 0\le j \le J_t}}
L_i^kL_j^tN_i^kN_j^t (T_k-T_{k-1})^{-\lambda/2}
\exp\left(-\frac{(v_i^k)^2+(v_j^t)^2}{2(1+\frac{\lambda}{16})}\right)\\
&\le
K (T_k-T_{k-1})^{-\lambda/2}
(L_{J_k}^k L_{J_t}^t)^2
\sum_{\substack{0\le i\le J_k \\ 0\le j \le J_t}}
\left(
	a_i^k
	\log^{1+C_H-p} a_i^k
\right)^{-\frac{1}{1+\frac{\lambda}{16}}}
\left(
	a_j^t
	\log^{1+C_H-p}a_j^t
\right)^{-\frac{1}{1+\frac{\lambda}{16}}}
\\
&\le
K (T_k-T_{k-1})^{-\lambda/2}\log^{10} T_k
\left(T_k\right)^{\frac{\frac{\lambda}{8}}{1+\frac{\lambda}{8}}}
\left(T_t\right)^{\frac{\frac{\lambda}{8}}{1+\frac{\lambda}{8}}}
\\
&\le
K T_k^{-\lambda/8} \le K \exp(-\lambda k^{(1+\varepsilon^2)/p}/8).
\end{align*}
Hence, we have
\begin{equation}
\label{eq:sumC}
\sum_{0\le t<k<\infty} Q_{k,t} <\infty.
\end{equation}
Now \eqref{eq:generalBC} follows from \eqref{eq:sumA}-\eqref{eq:sumC} and the general form of the Borel-Cantelli lemma.
\end{proof}

\vb

\noindent\textit{Step 3.}
If $p=1$, then for every $\varepsilon\in(0,\frac{1}{4})$
\begin{equation}
\label{eq:step3eq1}
\liminf_{t\toi}\frac{\log\left(\xi_p(t)/t\right)}{h_p(t)/t}\ge -(1+2\varepsilon)^2\quad\text{a.s.}
\end{equation}
and
\begin{equation}
\label{eq:step3eq2}
\liminf_{t\toi}\frac{\log\left(\xi_p(t)/t\right)}{h_p(t)/t}\le -(1-\varepsilon)\quad\text{a.s.}
\end{equation}
\begin{proof}
Put
\[
T_k = \exp(k),\quad S_k = T_k \exp\left(-(1+2\varepsilon)^2h_p(T_k)\right).
\]
Proceeding the same as in the proof of \eqref{eq:step}, one can obtain that
\[
\liminf_{k\toi}\frac{\log\left(\xi_p(T_k)/T_k\right)}{h_p(T_k)/T_k}\ge -(1+2\varepsilon)^2\quad\text{a.s.}
\]
On the other \K{hand} it is clear that
\[
\liminf_{t\toi}\frac{\log\left(\xi_p(t)/t\right)}{h_p(t)/t}
=
\liminf_{k\toi}\frac{\log\left(\xi_p(T_k)/T_k\right)}{h_p(T_k)/T_k}\quad\text{a.s.},
\]
since
\[
\liminf_{k\toi}\frac{\log\left(T_k/T_{k+1}\right)}{h_p(T_k)/T_k}=0.
\]
This proves \eqref{eq:step3eq1}.

Let
\[
T_k = \exp\left(k^{1+\varepsilon^2}\right),\quad S_k = T_k\exp\left(-(1-\varepsilon)h_p(T_k)\right).
\]
Noting that
\[
\frac{S_{k+1}-T_k}{S_{k+1}}\sim 1,\quad\as k,
\]
\K{following} along the same lines as in the proof of \eqref{eq:ub_goal}, we also have
\[
\liminf_{k\toi}\frac{\log\left(\xi_p(T_k)/T_k\right)}{h_p(T_k)/T_k}\le -(1-\varepsilon)\quad\text{a.s.},
\]
which proves \eqref{eq:step3eq2}.
\end{proof}

{\bf Acknowledgement}:
\k2{We are thankful to the editor and the referee for several suggestions which improved our manuscript.}
K. D\polhk{e}bicki was partially supported by
National Science Centre Grant No. 2015/17/B/ST1/01102 (2016-2019).
Research of K. Kosi\'nski was conducted under scientific
Grant No. 2014/12/S/ST1/00491 funded by National Science Centre.

\small\bibliography{biblioteczka}

\end{document}